\begin{document}

\newcounter{lemma}
\newcommand{\lemma}{\par \refstepcounter{lemma}%
{\bf Лемма \arabic{lemma}.}}

\newcounter{corollary}
\newcommand{\corollary}{\par \refstepcounter{corollary}%
{\bf Следствие \arabic{corollary}.}}

\newcounter{remark}
\newcommand{\remark}{\par \refstepcounter{remark}%
{\bf Замечание \arabic{remark}.}}

\newcounter{theorem}
\newcommand{\theorem}{\par \refstepcounter{theorem}%
{\bf Теорема \arabic{theorem}.}}

\newcounter{proposition}
\newcommand{\proposition}{\par \refstepcounter{proposition}%
{\bf Предложение \arabic{proposition}.}}

\newcommand{\proof}{{\it Доказательство.\,\,}}
\renewcommand{\refname}{\centerline{\bf Список литературы}}

{\bf Е.А.~Севостьянов} (Житомирский государственный университет им.\
И.~Франко)

\medskip
{\bf Є.О.~Севостьянов} (Житомирський державний університет ім.\
І.~Франко)

\medskip
{\bf E.A.~Sevost'yanov} (Zhytomyr Ivan Franko State University)

\medskip
{\bf О граничном продолжении отображений в терминах простых концов}

{\bf Про продовження відображень на межу в термінах простих кінців }

{\bf On boundary behavior of mappings in terms of prime ends}

\medskip
Изучается граничное поведение классов отображений, относящихся к
изучению классов Соболева и Орлича--Соболева в евклидовом $n$-мерном
пространстве. В терминах простых концов регулярных областей получены
теоремы о непрерывном продолжении указанных классов на границу
области. Более того, изучено глобальное поведение семейств указанных
отображений, в частности, доказаны результаты о равностепенной
непрерывности их семейств в замыкании области в терминах простых
концов.

\medskip
Вивчається межова поведінка класів відображень, стисло пов'язаних з
класами Соболєва і Орліча--Соболєва в евклідовому $n$-вимірному
просторі. В термінах простих кінців регулярних областей отримано
теореми про неперервне продовження зазначених класів на межу
області. Більше того, вивчено глобальну поведінку сімей вказаних
відображень, зокрема, доведено результати про одностайну
неперервність їх сімей в замиканні області в термінах простих
кінців.

\medskip
A boundary behavior of mappings, which are closely related with
Sobolev and Orlicz--Sobolev classes in $n$-measured Euclidean space,
is investigated. There are obtained theorems on continuous boundary
extension of classes mentioned above. Moreover, a global behavior of
the mappings mentioned above was studied. In particular, there is
proved equicontinuity of the classes mentioned above in the closure
of the domain in terms of prime ends.

\newpage

{\bf 1. Введение.} Настоящая работа посвящена изучению отображений с
ограниченным и конечным искажением, активно изучаемых в последнее
время в ряде работ отечественных и зарубежных авторов, см., напр.,
\cite{AC}, \cite{BGMV}, \cite{Cr$_1$}--\cite{Cr$_2$},
\cite{Gol$_1$}--\cite{Gol$_2$},
\cite{GRSY}, \cite{IM}, 
\cite{MRSY}, \cite{MRV$_1$}--\cite{MRV$_3$}, \cite{Re}, \cite{Ri} и
\cite{Vu}. Отдельного внимания заслуживают работы, в которых
изложены результаты, относящиеся к изучению классов Орлича-Соболева
в окрестности границы заданной области в терминах простых концов
(см. \cite{KR} и \cite{GRY}). Здесь же упомянем публикации, в
которых исследовано граничное поведение этих классов в случае
локально связных границ (см., напр., \cite{MRSY}, \cite{KRSS},
\cite{KSS}, \cite{Sev$_3$}).

\medskip
Остановимся теперь на работе \cite{KR}, где получены важные
результаты о граничном поведении гомеоморфизмов, удовлетворяющих
определённым геометрическим ограничениям и тесно связанных с
классами Соболева и Орлича--Соболева. Здесь, в частности, показано,
что один класс гомеоморфизмов может быть продолжен на границу
непрерывным образом, при этом, непрерывность должна пониматься в
терминах так называемых простых концов, поскольку речь идёт о
границах областей с <<плохими>> свойствами.

\medskip
Одной из основных целей настоящей работы является усиление
результатов из упомянутой публикации \cite{KR}. Задача, которую мы
ставим перед собой, состоит в том, чтобы изложить сформированную
теорию граничного продолжения отображений в максимально законченном
виде. Мы достигаем этой цели, распространяя указанные результаты на
отображения с ветвлением и используя здесь в качестве априорных
условий более общие ограничения на коэффициент искажения
отображений. Точнее говоря, прибегая к ограничениям на рост так
называемой дилатации порядка $\alpha$, мы тем самым охватываем
результаты работы \cite{KR}, где схожие ограничения касаются лишь
специально выбранного случая <<дилатации порядка $n$>>. Таким
образом, улучшение утверждений, относящихся к работе \cite{KR},
происходит как бы в двух независимых направлениях: с одной стороны,
мы ослабляем топологические условия на отображения, допуская наличие
ветвлений, с другой стороны, более общая интерпретация коэффициента
искажения также позволяет значительно расширить рассмотренную в
\cite{KR} ситуацию.

Отметим, что настоящая работа состоит как бы из двух частей, тесно
связанных между собой по смыслу и содержанию, но несколько
отличающихся по цели исследования. В первой части речь идёт
преимущественно о вопросах граничного поведения отображений, где
кроме классов Орлича--Соболева изучена возможность непрерывного
граничного продолжения так называемых кольцевых $Q$-отображений и
нижних $Q$-отображений (см. \cite{MRSY}). Эти отображения на данный
момент являются главным инструментом исследования отображений с
конечным искажением (см., напр., \cite{AC}--\cite{GRY}). Изучение
граничного поведения нижних $Q$-отображений приводит к основным
результатам настоящей статьи в качестве следствий. Эта часть работы
заключена в разделах 2--4. Вторая часть содержит результаты о
глобальном поведении отображений в области, точнее, о нормальных и
равностепенно-непрерывных семействах отображений в замыкании области
(где замыкание также следует понимать в смысле простых концов). Для
простоты и большей ясности изложения мы ограничиваемся здесь случаем
гомеоморфизмов. К этой части работы относятся разделы 5 и 6.
Очевидная связь обеих частей состоит в том, что глобальное поведение
отображений может быть изучено лишь после того, как установлено их
непрерывное продолжение на границу области.

\medskip
Приведём теперь необходимые для изложения сведения. Следующие
определения могут быть найдены в работе \cite{KR}. Пусть $\omega$ --
открытое множество в ${\Bbb R}^k$, $k=1,\ldots,n-1$. Непрерывное
отображение $\sigma:\omega\rightarrow{\Bbb R}^n$ называется {\it
$k$-мерной поверхностью} в ${\Bbb R}^n$. {\it Поверхностью} будет
называться произвольная $(n-1)$-мерная поверхность $\sigma$ в ${\Bbb
R}^n.$ Поверхность $\sigma:\omega\rightarrow D$ называется {\it
жордановой поверхностью} в $D$, если $\sigma(z_1)\ne\sigma(z_2)$ при
$z_1\ne z_2$. Далее мы иногда будем использовать $\sigma$ для
обозначения всего образа $\sigma(\omega)\subset {\Bbb R}^n$ при
отображении $\sigma$, $\overline{\sigma}$ вместо
$\overline{\sigma(\omega)}$ в ${\Bbb R}^n$ и $\partial\sigma$ вместо
$\overline{\sigma(\omega)}\setminus\sigma(\omega)$. Жорданова
поверхность $\sigma$ в $D$ называется {\it разрезом} области $D$,
если $\sigma$ разделяет $D$, т.\,е. $D\setminus \sigma$ имеет больше
одной компоненты, $\partial\sigma\cap D=\varnothing$ и
$\partial\sigma\cap\partial D\ne\varnothing$.

Последовательность $\sigma_1,\sigma_2,\ldots,\sigma_m,\ldots$
разрезов области $D$ называется {\it цепью}, если:

\medskip

(i) $\overline{\sigma_i}\cap\overline{\sigma_j}=\varnothing$ для
всех $i\ne j$, $i,j= 1,2,\ldots$;

\medskip

(ii) $\sigma_{m-1}$ и $\sigma_{m+1}$ содержатся в различных
компонентах $D\setminus \sigma_m$ для всех $m>1$;

\medskip

(iii) $\cap\,d_m=\varnothing$, где $d_m$ -- компонента $D\setminus
\sigma_m$, содержащая $\sigma_{m+1}$.

\medskip
Согласно определению, цепь разрезов $\{\sigma_m\}$ определяет цепь
областей $d_m\subset D$, таких, что $\partial\,d_m\cap
D\subset\sigma_m$ и $d_1\supset d_2\supset\ldots\supset
d_m\supset\ldots$. Две цепи разрезов $\{\sigma_m\}$ и
$\{\sigma_k^{\,\prime}\}$ называются {\it эквивалентными}, если для
каждого $m=1,2,\ldots$ область $d_m$ содержит все области
$d_k^{\,\prime}$ за исключением конечного числа и для каждого
$k=1,2,\ldots$ область $d_k^{\,\prime}$ также содержит все области
$d_m$ за исключением конечного числа. {\it Конец} области $D$ -- это
класс эквивалентных цепей разрезов $D$.

Пусть $K$ -- конец области $D$ в ${\Bbb R}^n$, $\{\sigma_m\}$ и
$\{\sigma_m^{\,\prime}\}$ -- две цепи в $K$, $d_m$ и
$d_m^{\,\prime}$ -- области, соответствующие $\sigma_m$ и
$\sigma_m^{\,\prime}$. Тогда
$$\bigcap\limits_{m=1}\limits^{\infty}\overline{d_m}\subset
\bigcap\limits_{m=1}\limits^{\infty}\overline{d_m^{\,\prime}}\subset
\bigcap\limits_{m=1}\limits^{\infty}\overline{d_m}\ ,$$ и, таким
образом,
$$\bigcap\limits_{m=1}\limits^{\infty}\overline{d_m}=
\bigcap\limits_{m=1}\limits^{\infty}\overline{d_m^{\,\prime}}\ ,$$
т.\,е. множество
$$I(K)=\bigcap\limits_{m=1}\limits^{\infty}\overline{d_m}$$ зависит
только от $K$ и не зависит от выбора цепи разрезов $\{\sigma_m\}$.
Множество $I(K)$ называется {\it телом конца} $K$.

Число прообразов
$N(y, S)={\rm card}\,S^{-1}(y)={\rm card}\,\{x\in\omega:S(x)=y\},\
y\in{\Bbb R}^n$ будем называть {\it функцией кратности} поверхности
$S.$ Другими словами, $N(y, S)$ -- кратность накрытия точки $y$
поверхностью $S.$ Пусть $\rho:{\Bbb R}^n\rightarrow\overline{{\Bbb
R}^+}$ -- борелевская функция, в таком случае интеграл от функции
$\rho$ по поверхности $S$ определяется равенством:  $\int\limits_S
\rho\,d{\mathcal{A}}:=\int\limits_{{\Bbb R}^n}\rho(y)\,N(y,
S)\,d{\mathcal H}^ky.$
Пусть $\Gamma$ -- семейство $k$-мерных поверхностей $S.$ Борелевскую
функцию $\rho:{\Bbb R}^n\rightarrow\overline{{\Bbb R}^+}$ будем
называть {\it допустимой} для семейства $\Gamma,$ сокр. $\rho\in{\rm
adm}\,\Gamma,$ если
\begin{equation}\label{eq8.2.6}\int\limits_S\rho^k\,d{\mathcal{A}}\geqslant 1\end{equation}
для каждой поверхности $S\in\Gamma.$ Пусть $p\geqslant 1,$ тогда
{\it $p$-модулем} семейства $\Gamma$ назовём величину
$$M_p(\Gamma)=\inf\limits_{\rho\in{\rm adm}\,\Gamma}
\int\limits_{{\Bbb R}^n}\rho^p(x)\,dm(x)\,.$$ Полагаем также
$M(\Gamma):=M_n(\Gamma).$ Далее, как обычно, для множеств $A$, $B$ и
$C$ в ${\Bbb R}^n$, $\Gamma(A,B,C)$ обозначает семейство всех
кривых, соединяющих $A$ и $B$ в $C$.

Следуя \cite{Na}, будем говорить, что конец $K$ является {\it
простым концом}, если $K$ содержит цепь разрезов $\{\sigma_m\}$,
такую, что
%
$$M(\Gamma(C, \sigma_m, D))=0$$
%
для некоторого континуума $C$ в $D$, где $M$ -- модуль семейства
$\Gamma(C, \sigma_m, D).$

\medskip
Будем говорить, что граница области $D$ в ${\Bbb R}^n$ является {\it
локально квазиконформной}, если каждая точка $x_0\in\partial D$
имеет окрестность $U$, которая может быть отображена квазиконформным
отображением $\varphi$ на единичный шар ${\Bbb B}^n\subset{\Bbb
R}^n$ так, что $\varphi(\partial D\cap U)$ является пересечением
${\Bbb B}^n$ с координатной гиперплоскостью. Говорим, что
ограниченная область $D$ в ${\Bbb R}^n$ {\it регулярна}, если $D$
может быть квазиконформно отображена на область с локально
квазиконформной границей.

\medskip
Как следует из теоремы 4.1 в \cite{Na}, при квазиконформных
отображениях $g$ области $D_0$ с локально квазиконформной границей
на область $D$ в ${\Bbb R}^n$, $n\geqslant2$, существует
естественное взаимно однозначное соответствие между точками
$\partial D_0$ и простыми концами области $D$ и, кроме того,
предельные множества $C(g,b)$, $b\in\partial D_0$, совпадают с телом
$I(P)$ соответствующих простых концов $P$ в $D$.

Если $\overline{D}_P$ является пополнением регулярной области $D$ ее
простыми концами и $g_0$ является квазиконформным отображением
области $D_0$ с локально квазиконформной границей на $D$, то оно
естественным образом определяет в $\overline{D}_P$ метрику
$\rho_0(p_1,p_2)=\left|{\widetilde
{g_0}}^{-1}(p_1)-{\widetilde{g_0}}^{-1}(p_2)\right|$, где
${\widetilde {g_0}}$ продолжение $g_0$ в $\overline{D_0}$,
упомянутое выше.

Если $g_*$ является другим квазиконформным отображением некоторой
области $D_*$ с локально квазиконформной границей на область $D$, то
соответствующая метрика
$\rho_*(p_1,p_2)=\left|{\widetilde{g_*}}^{-1}(p_1)-{\widetilde{g_*}}^{-1}(p_2)\right|$
порождает ту же самую сходимость и, следовательно, ту же самую
топологию в $\overline{D}_P$ как и метрика $\rho_0$, поскольку
$g_0\circ g_*^{-1}$ является квазиконформным отображением между
областями $D_*$ и $D_0$, которое по теореме 4.1 из \cite{Na}
продолжается до гомеоморфизма между $\overline{D_*}$ и
$\overline{D_0}$.

В дальнейшем, будем называть данную топологию в пространстве
$\overline{D}_P$ {\it топологией простых концов} и понимать
непрерывность отображений
$F:\overline{D}_P\rightarrow\overline{D^{\,\prime}}_P$ как раз
относительно этой топологии.

Пусть $\varphi:[0,\infty)\rightarrow[0,\infty)$ -- неубывающая
функция, $f$ -- локально интегрируемая вектор-функция $n$
вещественных переменных $x_1,\ldots,x_n,$ $f=(f_1,\ldots,f_n),$
$f_i\in W_{loc}^{1,1},$ $i=1,\ldots,n.$ Будем говорить, что
$f:D\rightarrow {\Bbb R}^n$ принадлежит классу
$W^{1,\varphi}_{loc},$ пишем $f\in W^{1,\varphi}_{loc},$ если
$\int\limits_{G}\varphi\left(|\nabla f(x)|\right)\,dm(x)<\infty$ для
любой компактной подобласти $G\subset D,$ где $|\nabla
f(x)|=\sqrt{\sum\limits_{i=1}^n\sum\limits_{j=1}^n\left(\frac{\partial
f_i}{\partial x_j}\right)^2}.$ Класс $W^{1,\varphi}_{loc}$
называется классом {\it Орлича--Соболева}. Отображение
$f:D\rightarrow {\Bbb R}^n$ называется {\it дискретным}, если
прообраз $f^{-1}\left(y\right)$ каждой точки $y\in{\Bbb R}^n$
состоит только из изолированных точек. Отображение $f:D\rightarrow
{\Bbb R}^n$ называется {\it открытым}, если образ любого открытого
множества $U\subset D$ является открытым множеством в ${\Bbb R}^n.$
Отображение $f:D\rightarrow {\Bbb R}^n$ называется {\it сохраняющим
границу отображением} (см. \cite[разд. 3, гл. II]{Vu$_1$}), если
выполнено соотношение $C(f,
\partial D)\subset \partial f(D).$ Отметим, что условие сохранения границы для
открытых дискретных отображений эквивалентно тому, что отображение
$f$ {\it замкнуто} (т.е., $f(A)$ замкнуто в $f(D)$ для любого
замкнутого $A\subset D$), а также тому, что $f^{\,-1}(K)$ компактно
в $D$ для любого компакта $K\subset f(D)$ (см.
\cite[теорема~3.3]{Vu$_1$}).

Будем говорить, что граница $\partial D$ области $D$ {\it сильно
достижима в точке $x_0\in
\partial D$ относительно $p$-модуля}, если для любой окрестности $U$ точки $x_0$ найдется
компакт $E\subset D,$ окрестность $V\subset U$ точки $x_0$ и число
$\delta
>0$ такие, что
$$
M_p(\Gamma(E,F, D))\geqslant \delta
$$
для любого континуума  $F$ в $D,$ пересекающего $\partial U$ и
$\partial V.$ (Здесь $M_p$ обозначает модуль семейств кривых, а
$\Gamma(E,F, D)$ обозначает семейство всех кривых, соединяющих
множества $E$ и $F$ в области $D,$ см., напр., \cite[разделы~2.2 и
2.5]{MRSY}). Граница области $D\subset{{\Bbb R}^n}$ называется {\it
сильно достижимой относительно $p$-модуля}, если указанное выше
свойство выполнено в каждой точке $x_0\in\partial D.$

\medskip
Для отображений класса $W_{loc}^{1,1},$ произвольного $p\geqslant 1$
и почти всех $x\in D$ определим следующие величины:
$l\left(f^{\,\prime}(x)\right):=\min\limits_{|h|=1}
{|f^{\,\prime}(x)h|},$ $J(x,f):={\rm det\,}f^{\,\prime}(x),$
\begin{equation}\label{eq0.1.1A}
K_{I, p}(x,f)\quad =\quad\left\{
\begin{array}{rr}
\frac{|J(x,f)|}{{l\left(f^{\,\prime}(x)\right)}^p}, & J(x,f)\ne 0,\\
1,  &  f^{\,\prime}(x)=0, \\
\infty, & \text{в\,\,остальных\,\,случаях}
\end{array}
\right.\,.
\end{equation}
Величина  $K_{I, p}(x,f)$ называется {\it внутренней дилатацией
отображения $f$ порядка $p$ в точке $x$.} Всюду ниже мы полагаем
$K_I(x, f):=K_{I, n}(x,f).$ Будем говорить, что локально
интегрируемая функция $\varphi:D\rightarrow{\Bbb R}$ имеет {\it
конечное среднее колебание} в точке $x_0\in D$, пишем $\varphi\in
FMO(x_0),$ если
%
%
%
%
$$\limsup\limits_{\varepsilon\rightarrow
0}\frac{1}{\Omega_n\varepsilon^n}\int\limits_{B( x_0,\,\varepsilon)}
|{\varphi}(x)-\overline{{\varphi}}_{\varepsilon}|\,
dm(x)<\infty\,,$$
%
%
где
$\overline{{\varphi}}_{\varepsilon}=\frac{1}
{\Omega_n\varepsilon^n}\int\limits_{B(x_0,\,\varepsilon)}
{\varphi}(x)\, dm(x).$
\medskip
Заметим, что $\Omega_n\varepsilon^n=m(B(x_0, \varepsilon)).$
Основным результатом настоящей статьи, относящимся к непрерывному
продолжению классов Орлича--Соболева на границу, является следующая
теорема.

\medskip
Всюду в статье, если не оговорено противное, $Q:{\Bbb
R}^n\rightarrow [0, \infty]$ -- измеримая по Лебегу функция, равная
нулю вне заданной области $D,$ при этом, мы требуем, чтобы
$0<Q(x)<\infty$ при всех $x\in D.$

\medskip
\begin{theorem}\label{th3}
{\,\sl Пусть $n\geqslant 2,$ $\alpha>1,$ область $D\subset {\Bbb
R}^n$ регулярна, а $D^{\,\prime}\subset {\Bbb R}^n$ ограничена и
имеет локально квазиконформную границу, являющуюся сильно достижимой
относительно $\alpha$-модуля. Пусть также отображение
$f:D\rightarrow D^{\,\prime},$ $D^{\,\prime}=f(D),$ принадлежащее
классу $W_{loc}^{1, \varphi}(D)$ является открытым, дискретным и
замкнутым. Тогда $f$ имеет непрерывное продолжение до непрерывного
отображения $f:\overline{D}_P\rightarrow \overline{D^{\,\prime}}_P,$
$f(\overline{D}_P)=\overline{D^{\,\prime}}_P,$ если выполнено
условие
\begin{equation}\label{eqOS3.0a}
\int\limits_{1}^{\infty}\left(\frac{t}{\varphi(t)}\right)^
{\frac{1}{n-2}}dt<\infty
\end{equation}
и, кроме того, найдётся измеримая по Лебегу функция $Q,$ такая что
$K_{I,\alpha} (x, f)\leqslant Q(x)$ при почти всех $x\in D,$ и
выполнено одно из следующих условий:

1) либо в каждой точке $x_0\in
\partial D$ при некотором $\varepsilon_0>0$ и всех $\varepsilon\in
(0, \varepsilon_0)$ выполнены следующие условия:
\begin{equation*}
\int\limits_{\varepsilon}^{\varepsilon_0}
\frac{dt}{t^{\frac{n-1}{\alpha-1}}q_{x_0}^{\,\frac{1}{\alpha-1}}(t)}<\infty\,,\qquad
\int\limits_{0}^{\varepsilon_0}
\frac{dt}{t^{\frac{n-1}{\alpha-1}}q_{x_0}^{\,\frac{1}{\alpha-1}}(t)}=\infty\,;
\end{equation*}

2) либо $Q\in FMO(x_0)$ в каждой точке $x_0\in \partial D.$
}
\end{theorem}
Здесь
$$q_{x_0}(r):=\frac{1}{\omega_{n-1}r^{n-1}}\int\limits_{|x-x_0|=r}Q(x)\,d{\mathcal
H}^{n-1}$$ обозначает среднее интегральное значение функции $Q$ над
сферой $S(x_0, r).$ В частности, заключение теоремы \ref{th3}
является верным, если $q_{x_0}(r)=\,O\left({\left(
\log{\frac{1}{r}}\right)}^{n-1}\right)$ при $r\rightarrow 0.$

\medskip
Приведём по этому поводу ещё один важный результат, аналог которого
был получен для гомеоморфизмов на плоскости в \cite[лемма~5.1 и
теорема~5.1]{GRY} (насколько нам известно, пространственный случай
как гомеоморфизмов, так и отображений с ветвлением, нигде ранее не
публиковался).

\medskip
\begin{theorem}\label{th4}
{\,\sl Пусть $n\geqslant 2,$ $Q:{\Bbb R}^n\rightarrow[0, \infty],$
$Q(x)\equiv 0$ на ${\Bbb R}^n\setminus D,$ $p\geqslant 1,$ область
$D\subset {\Bbb R}^n$ регулярна, а $D^{\,\prime}\subset {\Bbb R}^n$
ограничена и имеет локально квазиконформную границу, являющуюся
сильно достижимой относительно $p$-модуля. Пусть также отображение
$f:D\rightarrow D^{\,\prime},$ $D^{\,\prime}=f(D),$ является
кольцевым $Q$-отображением относительно $p$-модуля в каждой точке
$x_0\in
\partial D,$ кроме того, $f$ является открытым, дискретным и
замкнутым. Тогда $f$ продолжается до непрерывного отображения
$f:\overline{D}_P\rightarrow \overline{D^{\,\prime}}_P,$
$f(\overline{D}_P)=\overline{D}_P^{\,\prime},$ если выполнено одно
из следующих условий:

1) либо в каждой точке $x_0\in
\partial D$ при некотором $\varepsilon_0=\varepsilon_0(x_0)>0$ и
всех $0<\varepsilon<\varepsilon_0$
\begin{equation}\label{eq10A}
\int\limits_{\varepsilon}^{\varepsilon_0}
\frac{dt}{t^{\frac{n-1}{p-1}}q_{x_0}^{\,\frac{1}{p-1}}(t)}<\infty\,,\qquad
\int\limits_{0}^{\varepsilon_0}
\frac{dt}{t^{\frac{n-1}{p-1}}q_{x_0}^{\,\frac{1}{p-1}}(t)}=\infty\,,
\end{equation}
где
$q_{x_0}(r):=\frac{1}{\omega_{n-1}r^{n-1}}\int\limits_{|x-x_0|=r}Q(x)\,d{\mathcal
H}^{n-1};$

2) либо $Q\in FMO(x_0)$ в каждой точке $x_0\in \partial
D.$ 
}
\end{theorem}

\medskip
Сформулируем теперь наиболее важные результаты, относящиеся ко
второй части настоящей работы. Для этой цели напомним некоторые
определения. Пусть $(X,d)$ и
$\left(X^{\,{\prime}},{d}^{\,{\prime}}\right)$ ~--- метрические
пространства с расстояниями $d$  и ${d}^{\,{\prime}},$
соответственно. Семейство $\frak{F}$ отображений $f:X\rightarrow
{X}^{\,\prime}$ называется {\it равностепенно непрерывным в точке}
$x_0 \in X,$ если для любого $\varepsilon>0$ найдётся $\delta
> 0,$ такое, что ${d}^{\,\prime} \left(f(x),f(x_0)\right)<\varepsilon$ для
всех $f \in \frak{F}$ и  для всех $x\in X$ таких, что
$d(x,x_0)<\delta.$ Говорят, что $\frak{F}$ {\it равностепенно
непрерывно}, если $\frak{F}$ равностепенно непрерывно в каждой точке
из $x_0\in X.$ Всюду далее, если не оговорено противное, $d$ -- одна
из метрик в пространстве простых концов относительно области $D,$
упомянутых выше, а $d^{\,\prime}$ -- евклидова метрика.

\medskip
Для числа $\alpha,$ такого что $\alpha>1,$ областей $D,$
$D^{\,\prime}\subset {\Bbb R}^n,$ $z_1, z_2\in D,$ $z_1\ne z_2,$
$z_1^{\prime},$ $z_2^{\prime}\in D^{\prime}$ и произвольной
измеримой по Лебегу функции $Q(x)$ обозначим символом $\frak{F}_{
\varphi, Q, \alpha}^{z_1, z_2, z_1^{\,\prime}, z_2^{\,\prime}}(D,
D^{\,\prime})$ семейство всех гомеоморфизмов $f:D\rightarrow
D^{\,\prime}$ класса $W_{loc}^{1, \varphi}$ в $D,$
$f(D)=D^{\,\prime},$ таких что $K_{I, \alpha}(x, f)\leqslant Q(x)$ и
\begin{equation*} f(z_1)=z_1^{\prime},\quad
f(z_2)=z_2^{\prime}\,.\end{equation*}
Справедливо следующее утверждение.

\medskip
\begin{theorem}\label{th7}{\,\sl
Пусть $n\geqslant 2,$ $\alpha>1,$ область $D\subset {\Bbb R}^n$
регулярна, а область $D^{\,\prime}\subset {\Bbb R}^n$ имеет локально
квазиконформную границу, которая является сильно достижимой
относительно $\alpha$-модуля. Предположим, $Q\in L_{loc}^1({\Bbb
R}^n),$ что заданная неубывающая функция
$\varphi:[0,\infty)\rightarrow[0,\infty)$ удовлетворяет условию
(\ref{eqOS3.0a}), и что для каждого $x_0\in \overline{D}$ выполнено
одно из следующих условий:

1) либо $Q\in FMO(\overline{D});$

2) либо в каждой точке $x_0\in \overline{D}$ при некотором
$\varepsilon_0=\varepsilon_0(x_0)>0$ и всех
$0<\varepsilon<\varepsilon_0$
$$
\int\limits_{\varepsilon}^{\varepsilon_0}
\frac{dt}{t^{\frac{n-1}{n-\alpha}}q_{x_0}^{\,\frac{1}{\alpha-1}}(t)}<\infty\,,\qquad
\int\limits_{0}^{\varepsilon_0}
\frac{dt}{t^{\frac{n-1}{n-\alpha}}q_{x_0}^{\,\frac{1}{\alpha-1}}(t)}=\infty\,,
$$
где
$q_{x_0}(r):=\frac{1}{\omega_{n-1}r^{n-1}}\int\limits_{|x-x_0|=r}Q(x)\,d{\mathcal
H}^{n-1}.$

Тогда каждый элемент $f\in \frak{F}_{\varphi, Q, \alpha}^{z_1, z_2,
z_1^{\,\prime}, z_2^{\,\prime}}(D, D^{\,\prime})$ продолжается до
непрерывного отображения $\overline f\colon\overline
D_P\rightarrow\overline{D^{\,\prime}}_P$, при этом, семейство
отображений $\frak{F}_{\varphi, Q, \alpha}^{z_1, z_2,
z_1^{\,\prime}, z_2^{\,\prime}}(\overline{D}_P,
\overline{D^{\,\prime}}_P),$ состоящее из всех продолженных таким
образом отображений, является равностепенно непрерывным, а значит, и
нормальным  в $\overline{D}_P$.}
\end{theorem}

\medskip Ещё один вариант теоремы о нормальных семействах
отображений относится к ситуации, когда фиксируется одна, а не две
точки заданной области. По этому поводу напомним ещё одно важное
определение. Согласно \cite{GM}, область $D$ в ${\Bbb R}^n$ будем
называть {\it областью квазиэкстремальной длины}, сокр. $QED$-{\it
облас\-тью}, если
\begin{equation}\label{eq4***}
M(\Gamma(E, F, {\Bbb R}^n))\leqslant A\cdot M(\Gamma(E, F, D))
\end{equation}
для конечного числа $A\geqslant 1$ и всех континуумов $E$ и $F$ в
$D.$ Для областей $D,$ $D^{\,\prime}\subset {\Bbb R}^n,$ $b_0\in D,$
$b_0^{\,\prime}\in D^{\,\prime}$ и произвольной измеримой по Лебегу
функции $Q$ обозначим символом $\frak{F}_{b_0, b_0^{\,\prime},
\varphi, Q}(D, D^{\,\prime})$ семейство всех гомеоморфизмов
$f:D\rightarrow D^{\,\prime}$ класса $W_{loc}^{1, \varphi}$ в $D,$
$f(D)=D^{\,\prime},$ таких что $K_I(x, f)\leqslant Q(x)$ и
$f(b_0)=b_0^{\,\prime}.$ Справедливо следующее утверждение.

\medskip
\begin{theorem}\label{th7A}{\,\sl Пусть область $D$ регулярна, область $D^{\,\prime}$ ограничена,
имеет локально квазиконформную границу и, одновременно, является
$QED$-областью. Предположим, $Q\in L_{loc}^1({\Bbb R}^n),$ заданная
неубывающая функция $\varphi:[0,\infty)\rightarrow[0,\infty)$
удовлетворяет условию (\ref{eqOS3.0a}), и что для каждого $x_0\in
\overline{D}$ выполнено одно из следующих условий:

1) либо $Q\in FMO(\overline{D});$

2) либо в каждой точке $x_0\in \overline{D}$ при некотором
$\varepsilon_0=\varepsilon_0(x_0)>0$
$$
\int\limits_{0}^{\varepsilon_0}
\frac{dt}{tq_{x_0}^{\,\frac{1}{n-1}}(t)}=\infty\,,
$$
где
$q_{x_0}(r):=\frac{1}{\omega_{n-1}r^{n-1}}\int\limits_{|x-x_0|=r}Q(x)\,d{\mathcal
H}^{n-1}.$

Тогда каждый элемент $f\in \frak{F}_{b_0, b_0^{\,\prime}, \varphi,
Q}(D, D^{\,\prime})$ продолжается до непрерывного отображения
$\overline f\colon\overline D_P\rightarrow\overline
{D^{\,\prime}}_P$, при этом, семейство отображений $\frak{F}_{b_0,
b_0^{\,\prime}, \varphi, Q}(\overline{D}_P,
\overline{D^{\,\prime}}_P),$ состоящее из всех продолженных таким
образом отображений, является равностепенно непрерывным, а значит, и
нормальным  в $\overline{D}_P$.}
\end{theorem}

{\bf 2. Вспомогательные сведения.} Дальнейшее изложение и
доказательство теоремы \ref{th3} существенно опираются на аппарат
нак называемых нижних $Q$-гомеоморфизмов (см. \cite[глава~9]{MRSY}).
Говорят, что некоторое свойство $P$ выполнено для {\it $p$-почти
всех поверхностей} области $D,$ если оно имеет место для всех
поверхностей, лежащих в $D,$ кроме, быть может, некоторого их
подсемейства, $p$-модуль которого равен нулю. Будем говорить, что
измеримая по Лебегу функция $\rho:{\Bbb
R}^n\rightarrow\overline{{\Bbb R}^+}$ {\it обобщённо допустима
относительно $p$-модуля} для семейства $\Gamma$ $k$-мерных
поверхностей $S$ в ${\Bbb R}^n,$ сокр. $\rho\in{\rm ext}_p\,{\rm
adm}\,\Gamma,$ если соотношение (\ref{eq8.2.6}) выполнено для
$p$-почти всех поверхностей $S$ семейства $\Gamma.$ Следующий класс
отображений представляет собой обобщение квазиконформных отображений
в смысле кольцевого определения по Герингу (\cite{Ge$_3$}) и
отдельно исследуется (см., напр., \cite[глава~9]{MRSY}). Пусть $D$ и
$D^{\,\prime}$ -- заданные области в ${\Bbb R}^n,$ $n\geqslant 2,$
$x_0\in\overline{D}\setminus\{\infty\}$ и $Q:D\rightarrow(0,\infty)$
-- измеримая по Лебегу функция. Будем говорить, что $f:D\rightarrow
D^{\,\prime}$ -- {\it нижнее $Q$-отображение в точке $x_0$
относительно $p$-модуля,} как только
\begin{equation}\label{eq1A}
M_p(f(\Sigma_{\varepsilon}))\geqslant \inf\limits_{\rho\in{\rm
ext}_p\,{\rm adm}\,\Sigma_{\varepsilon}}\int\limits_{D\cap A(x_0,
\varepsilon, r_0)}\frac{\rho^p(x)}{Q(x)}\,dm(x)
\end{equation}
для каждого кольца $A(x_0, \varepsilon, r_0)=\{x\in {\Bbb R}^n:
\varepsilon<|x-x_0|<r_0\},$ $r_0\in(0,d_0),$ $d_0=\sup\limits_{x\in
D}|x-x_0|,$
где $\Sigma_{\varepsilon}$ обозначает семейство всех пересечений
сфер $S(x_0, r)$ с областью $D,$ $r\in (\varepsilon, r_0).$ Если
$p=n,$ то будем говорить, что $f$ -- нижнее $Q$-отображение в точке
$x_0.$ Будем говорить, что $f$ нижнее $Q$-отображение относительно
$p$-модуля в $A\subset \overline{D},$ если соотношение (\ref{eq1A})
имеет место для каждого $x_0\in A.$

\medskip Имеет место
следующее утверждение, которое может быть доказано аналогично
теореме 9.2 в \cite{MRSY}, и потому опускается.

\medskip
\begin{lemma}\label{lem4}{\,\sl
Пусть $D,$  $D^{\,\prime}\subset\overline{{\Bbb R}^n},$
$x_0\in\overline{D}\setminus\{\infty\}$ и $Q$ -- измеримая по Лебегу
функция. Отображение $f:D\rightarrow D^{\,\prime}$ является нижним
$Q$-отображением относительно $p$-модуля в точке $x_0,$ $p>n-1,$
тогда и только тогда, когда
%
$M_p(f(\Sigma_{\varepsilon}))\geqslant\int\limits_{\varepsilon}^{r_0}
\frac{dr}{\Vert\,Q\Vert_{s}(r)}\quad\forall\
\varepsilon\in(0,r_0)\,,\ r_0\in(0,d_0),$ $d_0=\sup\limits_{x\in
D}|x-x_0|,$
%
$s=\frac{n-1}{p-n+1},$ где, как и выше, $\Sigma_{\varepsilon}$
обозначает семейство всех пересечений сфер $S(x_0, r)$ с областью
$D,$ $r\in (\varepsilon, r_0),$
$ \Vert
Q\Vert_{s}(r)=\left(\int\limits_{D(x_0,r)}Q^{s}(x)\,d{\mathcal{A}}\right)^{\frac{1}{s}}$
-- $L_{s}$-норма функции $Q$ над сферой $D(x_0,r)=\{x\in D:
|x-x_0|=r\}=D\cap S(x_0,r)$. }
\end{lemma}

\medskip
Следующие важные сведения, касающиеся ёмкости пары множеств
относительно области, могут быть найдены в работе В.~Цимера
\cite{Zi}. Пусть $G$ -- ограниченная область в ${\Bbb R}^n$ и $C_{0}
, C_{1}$ -- непересекающиеся компактные множества, лежащие в
замыкании $G.$ Полагаем  $R=G \setminus (C_{0} \cup C_{1})$ и
$R^{\,*}=R \cup C_{0}\cup C_{1},$ тогда {\it $p$-ёмкостью пары
$C_{0}, C_{1}$ относительно замыкания $G$} называется величина
$C_p[G, C_{0}, C_{1}] = \inf \int\limits_{R} \vert \nabla u
\vert^{p}\ dm(x),$
где точная нижняя грань берётся по всем функциям $u,$ непрерывным в
$R^{\,*},$ $u\in ACL(R),$ таким что $u=1$ на $C_{1}$ и $u=0$ на
$C_{0}.$ Указанные функции будем называть {\it допустимыми} для
величины $C_p [G, C_{0}, C_{1}].$ Мы будем говорить, что  {\it
множество $\sigma \subset {\Bbb R}^n$ разделяет $C_{0}$ и $C_{1}$ в
$R^{\,*}$}, если $\sigma \cap R$ замкнуто в $R$ и найдутся
непересекающиеся множества $A$ и $B,$ являющиеся открытыми в
$R^{\,*} \setminus \sigma,$ такие что $R^{\,*} \setminus \sigma =
A\cup B,$ $C_{0}\subset A$ и $C_{1} \subset B.$ Пусть $\Sigma$
обозначает класс всех множеств, разделяющих $C_{0}$ и $C_{1}$ в
$R^{\,*}.$ Для числа $p^{\prime} = p/(p-1)$ определим величину
%
$$\widetilde{M}_{p^{\prime}}(\Sigma)=\inf\limits_{\rho\in
\widetilde{\rm adm} \Sigma} \int\limits_{{\Bbb
R}^n}\rho^{\,p^{\prime}}dm(x)\,,$$
%
где запись $\rho\in \widetilde{\rm adm}\,\Sigma$ означает, что
$\rho$ -- неотрицательная борелевская функция в ${\Bbb R}^n$ такая,
что
\begin{equation} \label{eq13.4.13}
\int\limits_{\sigma \cap R}\rho d{\mathcal H}^{n-1} \geqslant
1\quad\forall\, \sigma \in \Sigma\,.
\end{equation}
Заметим, что согласно результата Цимера
\begin{equation}\label{eq3}
\widetilde{M}_{p^{\,\prime}}(\Sigma)=C_p[G , C_{0} ,
C_{1}]^{\,-1/(p-1)}\,,
\end{equation}
см. \cite[теорема~3.13]{Zi} при $p=n$ и \cite[с.~50]{Zi$_1$} при
$1<p<\infty.$ Заметим также, что согласно результата Хессе
\begin{equation}\label{eq4}
M_p(\Gamma(E, F, D))= C_p[D, E, F]\,,
\end{equation}
см.~\cite[теорема~5.5]{Hes}.

\medskip
Для отображения $f:D\,\rightarrow\,{\Bbb R}^n,$ множества $E\subset
D$ и $y\,\in\,{\Bbb R}^n,$  определим {\it функцию кратности $N(y,
f, E)$} как число прообразов точки $y$ во множестве $E,$ т.е.
\begin{equation}\label{eq1.7A}
N(y, f, E)\,=\,{\rm card}\,\left\{x\in E: f(x)=y\right\}\,,\quad
%
N(f, E)\,=\,\sup\limits_{y\in{\Bbb R}^n}\,N(y, f, E)\,.
\end{equation}
Пусть $I$ -- открытый, полуоткрытый или замкнутый интервал в ${\Bbb
R}.$ {\it Носителем} кривой $\alpha:I\rightarrow {\Bbb R}^n$
называется множество
$$|\alpha|:=\{x\in {\Bbb R}^n: \exists\,t\in I:
\alpha(t)=x\}\,.$$ Иногда, если недоразумение невозможно, кривая
$\alpha$ и её носитель $|\alpha|$ отождествляются. Имеет место
следующее утверждение, см., напр., \cite[лемма~3.7]{Vu$_1$}.

\medskip
\begin{proposition}\label{pr7}
{\sl\, Пусть $f:D\rightarrow {\Bbb R}^n$ -- открытое дискретное и
замкнутое отображение, $\beta:[a, b)\rightarrow f(D)$ --
произвольная кривая и $l=\sup\limits_{y\in {\Bbb R}^n}N(y, f, D).$
Тогда найдутся кривые $\alpha_j:[a, b)\rightarrow D,$ $1\leqslant
j\leqslant l,$ со следующим свойством:

(1)\qquad $f\circ\alpha_j=\beta,$ (2)\quad ${\rm card}\,\{j:
\alpha_j(t)=x\}=|i(x, f)|$ для всех $x\in f^{\,-1}(|\beta|)$ и всех
$t\in [a, b)$ (где $i(x, f)$ -- локальный топологический индекс
отображения $f$ в точке $x$), и (3)\quad
$\bigcup\limits_{j=1}^l|\alpha_j|=f^{\,-1}(|\beta|).$ }
\end{proposition}

\medskip
Говорят, что семейство кривых $\Gamma_1$ {\it минорируется}
семейством $\Gamma_2,$ пишем $\Gamma_1\,>\,\Gamma_2,$
если для каждой кривой $\gamma\,\in\,\Gamma_1$ существует подкривая,
которая принадлежит семейству $\Gamma_2.$
В этом случае,
\begin{equation}\label{eq32*A}
\Gamma_1
> \Gamma_2 \quad \Rightarrow \quad M_p(\Gamma_1)\leqslant M_p(\Gamma_2)
\end{equation} (см. \cite[теорема~6.4, гл.~I]{Va}).

\medskip
{\bf 3. О продолжении нижних $Q$-отображений на границу.} В
дальнейшем нам понадобится следующее вспомогательное утверждение
(см., напр., \cite[лемма~7.4, гл.~7]{MRSY} и \cite[лемма~2.2]{Sal}
при $p\ne n.$

\medskip
\begin{proposition}\label{pr1A}
{\,\sl Пусть  $x_0 \in {\Bbb R}^n,$ $Q(x)$ -- измеримая по Лебегу
функция, $Q\in L_{loc}^1({\Bbb R}^n).$ Полагаем
$A:=A(r_1,r_2,x_0)=\{ x\,\in\,{\Bbb R}^n : r_1<|x-x_0|<r_2\}$ и
$\eta_0(r)=\frac{1}{Ir^{\frac{n-1}{p-1}}q_{x_0}^{\frac{1}{p-1}}(r)},$
где $I:=I=I(x_0,r_1,r_2)=\int\limits_{r_1}^{r_2}\
\frac{dr}{r^{\frac{n-1}{p-1}}q_{x_0}^{\frac{1}{p-1}}(r)}$ и
$q_{x_0}(r):=\frac{1}{\omega_{n-1}r^{n-1}}\int\limits_{|x-x_0|=r}Q(x)\,d{\mathcal
H}^{n-1}$ -- среднее интегральное значение функции $Q$ над сферой
$S(x_0, r).$ Тогда
$$
\frac{\omega_{n-1}}{I^{p-1}}=\int\limits_{A} Q(x)\cdot
\eta_0^p(|x-x_0|)\ dm(x)\leqslant\int\limits_{A} Q(x)\cdot
\eta^p(|x-x_0|)\ dm(x)
$$
для любой измеримой по Лебегу функции $\eta :(r_1,r_2)\rightarrow
[0,\infty]$ такой, что
$\int\limits_{r_1}^{r_2}\eta(r)dr=1. $ }
\end{proposition}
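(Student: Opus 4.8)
\proof The plan is to pass to spherical coordinates, turning the statement into a one–dimensional extremal problem, to verify that $\eta_0$ is an admissible density realizing the value $\omega_{n-1}/I^{p-1}$, and to get the lower bound from H\"older's inequality.

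\emph{Reduction to one dimension.} Put $\psi(r):=\int\limits_{S(x_0,r)}Q(x)\,d{\mathcal H}^{n-1}=\omega_{n-1}r^{n-1}q_{x_0}(r)$; since $Q\in L^1_{loc}({\Bbb R}^n),$ Fubini's theorem gives $\psi(r)<\infty$ for a.e.\ $r\in(r_1,r_2)$ (so also $q_{x_0}(r)<\infty$ a.e., whence $I>0$). For any nonnegative measurable $\eta$ on $(r_1,r_2)$ the function $x\mapsto Q(x)\,\eta^p(|x-x_0|)$ is nonnegative and measurable on $A,$ so Tonelli's theorem in polar coordinates yields
$$\int\limits_{A}Q(x)\,\eta^p(|x-x_0|)\,dm(x)=\int\limits_{r_1}^{r_2}\eta^p(r)\left(\int\limits_{S(x_0,r)}Q\,d{\mathcal H}^{n-1}\right)dr=\int\limits_{r_1}^{r_2}\eta^p(r)\,\psi(r)\,dr\,.$$
Hence it suffices to show $\int_{r_1}^{r_2}\eta^p\psi\,dr\geqslant\omega_{n-1}/I^{p-1}$ whenever $\int_{r_1}^{r_2}\eta\,dr=1,$ with equality at $\eta=\eta_0.$ (If $I=\infty$ the asserted value is $0$ and the inequality is trivial; otherwise $0<I<\infty$ and the argument below is literal.)

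\emph{The extremal function.} By the definition of $I,$ $\int_{r_1}^{r_2}\eta_0(r)\,dr=\frac1I\int_{r_1}^{r_2}\frac{dr}{r^{\frac{n-1}{p-1}}q_{x_0}^{\frac{1}{p-1}}(r)}=1,$ so $\eta_0$ is among the competitors. Using $\psi(r)=\omega_{n-1}r^{n-1}q_{x_0}(r)$ together with $\frac{p(n-1)}{p-1}-(n-1)=\frac{n-1}{p-1}$ and $\frac{p}{p-1}-1=\frac{1}{p-1},$ a direct computation gives
$$\eta_0^p(r)\,\psi(r)=\frac{\omega_{n-1}r^{n-1}q_{x_0}(r)}{I^{p}\,r^{\frac{p(n-1)}{p-1}}q_{x_0}^{\frac{p}{p-1}}(r)}=\frac{\omega_{n-1}}{I^{p}}\cdot\frac{1}{r^{\frac{n-1}{p-1}}q_{x_0}^{\frac{1}{p-1}}(r)}=\frac{\omega_{n-1}}{I^{p-1}}\,\eta_0(r)\,,$$
and integrating over $(r_1,r_2)$ and using $\int_{r_1}^{r_2}\eta_0\,dr=1$ gives the equality $\int_A Q\,\eta_0^p\,dm=\omega_{n-1}/I^{p-1}.$

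\emph{The lower bound.} Let $\eta\geqslant0$ be measurable with $\int_{r_1}^{r_2}\eta\,dr=1.$ Writing $\eta=\bigl(\eta^p\psi\bigr)^{1/p}\,\psi^{-1/p}$ and applying H\"older's inequality with exponents $p$ and $p'=p/(p-1),$
$$1=\int\limits_{r_1}^{r_2}\eta(r)\,dr\leqslant\left(\int\limits_{r_1}^{r_2}\eta^p(r)\,\psi(r)\,dr\right)^{1/p}\left(\int\limits_{r_1}^{r_2}\psi(r)^{-\frac{1}{p-1}}\,dr\right)^{\frac{p-1}{p}}\,.$$
Since $\psi(r)^{-\frac{1}{p-1}}=\omega_{n-1}^{-\frac{1}{p-1}}\,r^{-\frac{n-1}{p-1}}\,q_{x_0}(r)^{-\frac{1}{p-1}},$ the last factor equals $\omega_{n-1}^{-1/p}\,I^{(p-1)/p};$ raising the inequality to the power $p$ gives $\omega_{n-1}\leqslant I^{p-1}\int_{r_1}^{r_2}\eta^p\psi\,dr,$ that is $\omega_{n-1}/I^{p-1}\leqslant\int_A Q\,\eta^p\,dm,$ as required.

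\emph{Remark on difficulty.} There is no deep obstacle here: the heart of the matter is the single use of H\"older's inequality, and the only steps demanding a little care are the a.e.\ finiteness of $\psi$ (which legitimizes the polar–coordinate reduction and is where $Q\in L^1_{loc}$ enters) and the bookkeeping of the exponents occurring in $\eta_0.$ $\blacksquare$
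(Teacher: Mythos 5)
Your proof is correct. The paper itself does not prove Proposition \ref{pr1A} --- it only cites it as a known fact from the literature (Lemma 7.4 of the monograph of Martio--Ryazanov--Srebro--Yakubov and Lemma 2.2 of Salimov for $p\ne n$) --- and your argument (reduction to a one-dimensional extremal problem via polar coordinates, verification that $\eta_0$ attains the value $\omega_{n-1}/I^{p-1}$, and H\"older's inequality with exponents $p$ and $p/(p-1)$ for the lower bound) is exactly the standard proof given in those sources; the exponent bookkeeping and the degenerate case $I=\infty$ are handled correctly.
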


Справедливо следующее утверждение.

\medskip
\begin{theorem}\label{th2}
{\,\sl Пусть $n\geqslant 2,$ $p>n-1,$ область $D\subset {\Bbb R}^n$
регулярна, а $D^{\,\prime}\subset {\Bbb R}^n$ ограничена и имеет
локально квазиконформную границу, являющуюся сильно достижимой
относительно $\alpha$-модуля, $\alpha:=\frac{p}{p-n+1}.$ Пусть также
отображение $f:D\rightarrow D^{\,\prime},$ $D^{\,\prime}=f(D),$
является нижним $Q$-отображением в каждой точке $x_0\in \partial D$
относительно $p$-модуля, кроме того, $f$ является открытым,
дискретным и замкнутым. Тогда $f$ продолжается до непрерывного
отображения $f:\overline{D}_P\rightarrow \overline{D^{\,\prime}}_P,$
$f(\overline{D}_P)=\overline{D^{\,\prime}}_P,$ если выполнено одно
из следующих условий:

\medskip
1) либо в каждой точке $x_0\in \partial D$ при некотором
$\varepsilon_0=\varepsilon_0(x_0)>0$ и всех
$0<\varepsilon<\varepsilon_0$ выполнены следующие условия:
\begin{equation}\label{eq10}
\int\limits_{\varepsilon}^{\varepsilon_0}
\frac{dt}{t^{\frac{n-1}{\alpha-1}}\widetilde{q}_{x_0}^{\,\frac{1}{\alpha-1}}(t)}<\infty\,,\qquad
\int\limits_{0}^{\varepsilon_0}
\frac{dt}{t^{\frac{n-1}{\alpha-1}}\widetilde{q}_{x_0}^{\,\frac{1}{\alpha-1}}(t)}=\infty\,,
\end{equation}
где $\alpha=\frac{p}{p-n+1},$
$\widetilde{q}_{x_0}(r):=\frac{1}{\omega_{n-1}r^{n-1}}\int\limits_{|x-x_0|=r}Q^{\frac{n-1}{p-n+1}}(x)\,d{\mathcal
H}^{n-1}$ обозначает среднее интегральное значение функции
$Q^{\frac{n-1}{p-n+1}}(x)$ над сферой $S(x_0, r);$

2) либо $Q^{\frac{n-1}{p-n+1}}\in FMO(\partial D).$ }
\end{theorem}

\begin{proof} Докажем вначале, что $f$ имеет непрерывное продолжение
$f:\overline{D}_P\rightarrow \overline{D^{\,\prime}}_P.$ Рассмотрим
прежде всего случай 1), т.е., когда имеют место соотношения
(\ref{eq10}). Так как $D^{\,\prime}$ имеет локально квазиконформную
границу, то $\overline{D^{\,\prime}}_P=\overline{D^{\,\prime}}$ (см.
\cite[теорема~4.1]{Na}). В силу метризуемости пространства
$\overline{D}_P$ достаточно доказать, что для каждого простого конца
$P$ области $D$ предельное множество
$$L=C(f, P):=\left\{y\in{{\Bbb
R}^n}:y=\lim\limits_{m\rightarrow\infty}f(x_m),x_m\rightarrow
P,x_m\in D\right\}$$ состоит из единственной точки $y_0\in\partial
D^{\,\prime}$. (Здесь мы говорим, что последовательность точек
$x_m\in D$, $m=1,2,\ldots$, {\it сходится к концу} $K$, пишем
$x_m\rightarrow P$ при $m\rightarrow\infty,$ если для каждой цепи
$\{\sigma_m\}$ в $K$ и каждой области $d_m$ все точки $x_m$, за
исключением, быть может, конечного числа, принадлежат $d_m$, см.,
напр., \cite[$\S\,3$]{KR}).

Заметим, что $L\ne\varnothing$ в силу компактности множества
$\overline{D^{\,\prime}}$, и $L$ является подмножеством $\partial
D^{\,\prime}$ ввиду замкнутости отображения $f$ (см.
\cite[теорема~3.3]{Vu$_1$}). Предположим, что существуют, по крайней
мере, две точки $y_0$ и $z_0\in L$. Положим $U=B(y_0,r_0)$, где
$0<r_0<|y_0-z_0|$.

В силу \cite[лемма~2]{KR} каждый простой конец $P$ регулярной
области $D$ в ${\Bbb R}^n$, $n\geqslant 2,$ содержит цепь разрезов
$\sigma_m$, лежащую на сферах $S_m$ с центром в некоторой точке
$x_0\in\partial D$ и евклидовыми радиусами $r_m\rightarrow 0$ при
$m\rightarrow\infty$. Пусть $D_m$ -- области, ассоциированные с
разрезами $\sigma_m$, $m=1,2,\ldots$. Тогда существуют точки $y_m$ и
$z_m$ в областях $D_{m}^{\,\prime}=f(D_{m})$, такие что
$|y_0-y_m|<r_0$ и $|y_0-z_m|>r_0$ и, кроме того, $y_m\rightarrow
y_0$ и $z_m\rightarrow z_0$ при $m\rightarrow\infty$.
Соответственно, найдутся $x_m$ и $x_m^{\,\prime}$ в области $D_m,$
такие что $f(x_m)=y_m$ и $f(x_m^{\,\prime})=z_m.$ Соединим точки
$x_m$ и $y_m$ кривой $\gamma_m,$ целиком лежащей в области $D_m.$
Пусть $C_m$ -- образ этой кривой при отображении $f$ в
$D^{\,\prime}.$ Заметим, что $\partial U\cap |C_m|\ne\varnothing$
ввиду \cite[теорема~1.I.5, $\S\,46$]{Ku} (как обычно, $|C_m|$
обозначает носитель кривой $C_m$).

В силу определения сильно достижимой границы относительно
$\alpha$-модуля существует компакт $E\subset D^{\,\prime}$ и число
$\delta>0$, такие, что
\begin{equation}\label{eq1}
M_{\alpha}(\Gamma(E, |C_m|, D^{\,\prime}))\geqslant\delta
\end{equation}
для всех достаточно больших $m$.

Без ограничения общности можем считать, что последнее условие
выполнено для всех $m=1,2,\ldots$. Заметим, что $C=f^{-1}(E)$
является компактным подмножеством области $D$ ввиду замкнутости
отображения $f$ (см. \cite[теорема~3.3]{Vu$_1$}), поэтому, поскольку
$I(P)=\bigcap\limits_{m=1}\limits^{\infty}\overline{D_m}\subset
\partial D$ (см. \cite[предложение~1]{KR}), то не ограничивая общности рассуждений, можно считать, что
$C\cap\overline{D_m}=\varnothing$ для каждого $m\in {\Bbb N.}$
Положим $\delta_0:={\rm dist}\,(x_0, C).$ Не ограничивая общности,
уменьшая $\varepsilon_0,$ если это необходимо, можно считать, что
$\varepsilon_0<\delta_0.$

Пусть $\Gamma_m$ -- семейство всех кривых в $D,$ соединяющих $C$ и
$\sigma_m$, $m=1,2,\ldots$. Заметим, что $\Gamma(|\gamma_m|, C,
D)>\Gamma_m$ ввиду \cite[теорема~1.I.5, $\S\,46$]{Ku}, так что
$f(\Gamma(|\gamma_m|, C, D))>f(\Gamma_m)$ и ввиду (\ref{eq32*A})
\begin{equation}\label{eq1D}
M_{\alpha}(f(\Gamma(|\gamma_m|, C, D)))\leqslant
M_{\alpha}(f(\Gamma_m))\,.
\end{equation}
Оценим $M_{\alpha}(f(\Gamma(|\gamma_m|, C, D)))$ в формуле
(\ref{eq1D}) снизу.
Пусть кривая $\beta:[0,1)\rightarrow D^{\,\prime}$ такова, что
$\beta(0)\in |C_m|$ и $\beta(t)\rightarrow p\in E$ при $t\rightarrow
1-0,$ где $p$ -- некоторый фиксированный элемент множества $E.$
Тогда для кривой $\beta$ ввиду предложения \ref{pr7} найдётся другая
кривая $\gamma:[0, 1)\rightarrow D$ с началом в $|\gamma_m|,$ такая,
что $f\circ\gamma=\beta.$ Поскольку $f$ -- замкнутое отображение, то
оно сохраняет границу (см. \cite[теорема~3.3]{Vu$_1$}) и, значит,
кривая $\gamma$ лежит в $D$ вместе со своим замыканием. Более того,
в силу дискретности отображения $f$ кривая $\gamma$ может быть
продолжена до кривой $\overline{\gamma}:[0, 1]\rightarrow D.$
Заметим, что по определению $\beta(1)\in E,$ так что
$\overline{\gamma}(1)\in C$ по определению множества $C.$ Значит,
$\overline{\gamma}\in \Gamma(|\gamma_m|, C, D).$ Рассмотрим
семейство $\Gamma_m^*,$ состоящее из всех возможных таких кривых
$\overline{\gamma},$ тогда $\Gamma_m^*\subset\Gamma(|\gamma_m|, C,
D)$ и, одновременно, $f(\Gamma_m^*)=\Gamma(E, |C_m|, D^{\,\prime}).$
Тогда
\begin{equation}\label{eq5}
M_{\alpha}(\Gamma(E, |C_m|,
D^{\,\prime}))=M_{\alpha}(f(\Gamma^*_m))\leqslant
M_{\alpha}(f(\Gamma(|\gamma_m|, C, D)))\,.
\end{equation}
Из (\ref{eq1}), (\ref{eq1D}) и (\ref{eq5}) вытекает, что
\begin{equation}\label{eq7}
M_{\alpha}(f(\Gamma_{m}))\geqslant\delta
\end{equation}
для всех $m=1,2,\ldots$. Заметим, что
$f(\Gamma_m)\subset\Gamma(f(\sigma_m), E, D^{\,\prime}),$ поэтому из
(\ref{eq7}) вытекает, что
\begin{equation}\label{eq7B}
M_{\alpha}(\Gamma(f(\sigma_m), E,
D^{\,\prime}))\geqslant\delta\quad\forall\quad m=1,2,\ldots .
\end{equation}

\medskip
Оценим теперь величину $M_{\alpha}(\Gamma(f(\sigma_m), E,
D^{\,\prime}))$ сверху. Для этого подберём подходящим для нас
способом систему разделяющих множеств для $E$ и $f(\sigma_m)$ и
воспользуемся определением нижнего $Q$-отображения.

\medskip
Заметим, прежде всего, что множества $E$ и $\overline{f(B(x_0,
r)\cap D)}$ не пересекаются при любом $r\in (0, \varepsilon_0).$
Предположим противное, а именно, что найдётся $\zeta_0\in E\cap
\overline{f(B(x_0, r)\cap D)}.$ Тогда
$\zeta_0=\lim\limits_{k\rightarrow\infty} \zeta_k,$ где $\zeta_k\in
f(B(x_0, r)\cap D).$ Отсюда $\zeta_k=f(\xi_k),$ $\xi_k\in B(x_0,
r)\cap D.$ Так как $\overline{D}$ -- компакт, то из
последовательности $\xi_k$ можно выделить сходящуюся
подпоследовательность $\xi_{k_l}\rightarrow \xi_0\in
\overline{B(x_0, r)\cap D}.$ Случай $\xi_0\in \partial D$
невозможен, поскольку $f$ -- замкнутое отображение и, значит,
сохраняет границу: $C(f,
\partial D)\subset \partial f(D),$ но у нас $\zeta_0$ -- внутренняя
точка $D^{\,\prime}.$ Пусть $\xi_0$ -- внутренняя точка $D.$ По
непрерывности отображения $f$ имеем $f(\xi_0)=\zeta_0.$ Но тогда
одновременно $\xi_0\in B(x_0, \varepsilon_0)\cap D$ и $\xi_0\in
f^{\,-1}(E),$ что противоречит выбору $\varepsilon_0.$ Таким
образом, $E\cap \overline{f(B(x_0, r)\cap D)}=\varnothing$ и,
значит,
\begin{equation}\label{eq8}
E\subset D^{\,\prime}\setminus \overline{f(B(x_0, r)\cap D)}\,, r\in
(0, \varepsilon_0)\,.
\end{equation}
Из (\ref{eq8}), в частности, вытекает, что множества $E$ и
$f(\sigma_m)$ не пересекаются.

Заметим также, что при произвольном $r\in (r_m, \varepsilon_0)$
множество $A_r:=\partial (f(B(x_0, r)\cap D))\cap D^{\,\prime}$
отделяет $E$ и $f(\sigma_m)$ в $D^{\,\prime}.$ Действительно,
$$D^{\,\prime}=B_r\cup A_r\cup C_r\qquad\forall\quad
r\in (r_m, \varepsilon_0)\,,$$
где множества $B_r:=f(B(x_0, r)\cap D)$ и
$C_r:=D^{\,\prime}\setminus \overline{f(B(x_0, r)\cap D)}$ открыты в
$D^{\,\prime},$ $f(\sigma_m)\subset B_r,$ $E\subset C_r$ и $A_r$
замкнуто в $D^{\,\prime}.$

\medskip
Пусть $\Sigma_m$ -- семейство всех множеств, отделяющих
$f(\sigma_m)$ от $E$ в $D^{\,\prime}.$ Поскольку $f$ -- открытое
замкнутое отображение, мы получим, что
\begin{equation}\label{eq7A}
(\partial f(B(x_0, r)\cap D))\cap D^{\,\prime}\subset f(S(x_0,
r)\cap D), r>0.
\end{equation}
Действительно, пусть $\zeta_0\in (\partial f(B(x_0, r)\cap D))\cap
D^{\,\prime}.$ Тогда найдётся последовательность $\zeta_k\in
f(B(x_0, r)\cap D)$ такая, что $\zeta_k\rightarrow \zeta_0$ при
$k\rightarrow \infty,$ где $\zeta_k=f(\xi_k),$ $\xi_k\in B(x_0,
r)\cap D.$ Не ограничивая общности рассуждений, можно считать, что
$\xi_k\rightarrow \xi_0$ при $k\rightarrow\infty.$ Заметим, что
случай $\xi_0\in \partial D$ невозможен, поскольку в этом случае
$\zeta_0\in C(f,
\partial D),$ что противоречит замкнутости отображения $f.$ Тогда
$\xi_0\in D.$ Возможны две ситуации: 1) $\xi_0\in B(x_0 , r)\cap D$
и 2) $\xi_0\in S(x_0 , r)\cap D.$ Заметим, что случай 1) невозможен,
поскольку, в этом случае, $f(\xi_0)=\zeta_0$ и $\zeta_0$ --
внутренняя точка множества $f(B(x_0, r)\cap D),$ что противоречит
выбору $\zeta_0.$ Таким образом, включение (\ref{eq7A}) установлено.

\medskip
Здесь и далее объединения вида $\bigcup\limits_{r\in (r_1, r_2)}
\partial f(B(x_0, r)\cap D)\cap D^{\,\prime}$ понимаются как семейства множеств. Пусть $\rho^{n-1}\in \widetilde{{\rm
adm}}\bigcup\limits_{r\in (r_m, \varepsilon_0)}
\partial f(B(x_0, r)\cap D)\cap D^{\,\prime}$ в смысле соотношения (\ref{eq13.4.13}), тогда также
$\rho\in {\rm adm}\bigcup\limits_{r\in (r_m, \varepsilon_0)}
\partial f(B(x_0, r)\cap D)\cap D^{\,\prime}$ в смысле соотношения (\ref{eq8.2.6}) при
$k=n-1.$ Ввиду (\ref{eq7A}) мы получим, что $\rho\in {\rm
adm}\bigcup\limits_{r\in (r_m, \varepsilon_0)} f(S(x_0, r)\cap D)$
и, следовательно, так как $\widetilde{M}_q(\Sigma_m)\geqslant
M_{q(n-1)}(\Sigma_m)$ при произвольном $q\geqslant 1,$ то
$$\widetilde{M}_{p/(n-1)}(\Sigma_m)\geqslant$$
\begin{equation}\label{eq5A}\geqslant
\widetilde{M}_{p/(n-1)}\left(\bigcup\limits_{r\in (r_m,
\varepsilon_0)}
\partial f(B(x_0, r)\cap D)\cap D^{\,\prime}\right)\geqslant
\end{equation}
$$\geqslant\widetilde{M}_{p/(n-1)}
\left(\bigcup\limits_{r\in (r_m, \varepsilon_0)} f(S(x_0, r)\cap
D)\right)\geqslant M_{p}\left(\bigcup\limits_{r\in (r_m,
\varepsilon_0)} f(S(x_0, r)\cap D)\right)\,.$$

Однако, ввиду (\ref{eq3}) и (\ref{eq4}), учитывая, что $p>n-1,$
имеем
\begin{equation}\label{eq6A}
\widetilde{M}_{p/(n-1)}(\Sigma_m)=\frac{1}{(M_{\alpha}(\Gamma(f(\sigma_m),
E, D^{\,\prime})))^{1/(\alpha-1)}}\,.
\end{equation}
По лемме \ref{lem4}
$$M_{p}\left(\bigcup\limits_{r\in (r_m, \varepsilon_0)} f(S(x_0,
r)\cap D)\right)\geqslant
$$
\begin{equation}\label{eq8A}
\geqslant \int\limits_{r_m}^{\varepsilon_0}
\frac{dr}{\Vert\,Q\Vert_{s}(r)}= \int\limits_{r_m}^{\varepsilon_0}
\frac{dt}{\omega^{\frac{p-n+1}{n-1}}_{n-1}
t^{\frac{n-1}{\alpha-1}}\widetilde{q}_{x_0}^{\,\frac{1}{\alpha-1}}(t)}\quad\forall\,\,
m\in {\Bbb N}\,, s=\frac{n-1}{p-n+1}\,,\end{equation}
$\alpha=p/(p-n+1),$ где
$\Vert
Q\Vert_{s}(r)=\left(\int\limits_{D(x_0,r)}Q^{s}(x)\,d{\mathcal{A}}\right)^{\frac{1}{s}}$
-- $L_{s}$-норма функции $Q$ над сферой $D(x_0,r):=S(x_0,r)\cap D.$
Из условий (\ref{eq10}) вытекает, что
$\int\limits_{r_m}^{\varepsilon_0}
\frac{dt}{\omega^{\frac{p-n+1}{n-1}}_{n-1}
t^{\frac{n-1}{\alpha-1}}\widetilde{q}_{x_0}^{\,\frac{1}{\alpha-1}}(t)}\rightarrow\infty$
при $m\rightarrow\infty.$

Из соотношений (\ref{eq5A}) и (\ref{eq8A}) следует, что
$\widetilde{M}_{p/(n-1)}(\Sigma_m)\rightarrow\infty$ при
$m\rightarrow\infty,$ однако, в таком случае, из (\ref{eq6A})
следует, что $M_{\alpha}(\Gamma(f(\sigma_m), E,
D^{\,\prime}))\rightarrow 0$ при $m\rightarrow\infty,$ что
противоречит неравенству (\ref{eq7B}). Полученное противоречие
опровергает предположение, что предельное множество $C(f, P)$
состоит более чем из одной точки.

\medskip
Рассмотрим теперь случай 2), а именно, пусть теперь $Q^s\in
FMO(\partial D),$ $s=(n-1)/(p-n+1).$ Покажем, что в этом случае
выполнено второе условие в (\ref{eq10}). Для этой цели воспользуемся
предложением \ref{pr1A}. Согласно этому предложению для любой
неотрицательной измеримой функции $\eta:(\varepsilon,
\varepsilon_0)\rightarrow [0, \infty,]$ удовлетворяющей условию
$\int\limits_{\varepsilon}^{\varepsilon_0}\eta(t)dt=1,$ выполнено
неравенство
\begin{equation}\label{eq12B} \frac{\omega_{n-1}}{J^{\alpha-1}}\leqslant
\int\limits_{A(x_0, \varepsilon, \varepsilon_0)} Q^s(x)\cdot
\eta^{\alpha}(|x-x_0|)dm(x)\,,
\end{equation}
где $s=(n-1)/(p-n+1),$ $J:=J(x_0, \varepsilon,
\varepsilon_0):=\int\limits_{\varepsilon}^{\varepsilon_0}\frac{dr}{r^{\frac{n-1}{\alpha-1}}\
\widetilde{q}_{x_0}^{\frac{1}{\alpha-1}}(r)},$ а
$\widetilde{q}_{x_0}(r)$ -- среднее значение функции $Q^s$ над
$S(x_0, r)\cap D.$ Положим
$\psi(t):=\frac{1}{\left(t\,\log{\frac1t}\right)^{n/{\alpha}}},$
$I(\varepsilon,
\varepsilon_0):=\int\limits_{\varepsilon}^{\varepsilon_0}\psi(t) dt
\geqslant \log{\frac{\log{\frac{1}
{\varepsilon}}}{\log{\frac{1}{\varepsilon_0}}}}$ и
$\eta(t):=\psi(t)/I(\varepsilon, \varepsilon_0).$ Заметим, что
$\int\limits_{\varepsilon}^{\varepsilon_0}\eta(t)dt=1,$ кроме того,
для функций класса $FMO,$ как известно,
\begin{equation}\label{eq31*}
\int\limits_{\varepsilon<|x|<{e_0}}\frac{Q^s(x+x_0)\, dm(x)}
{\left(|x| \log \frac{1}{|x|}\right)^n} = O \left(\log\log
\frac{1}{\varepsilon}\right)
\end{equation}
при  $\varepsilon \rightarrow 0 $ и для некоторого $e_0>0,$ $e_0
\leqslant {\rm dist}\,\left(0,\partial D\right).$ Исходя из
(\ref{eq31*}) правая часть соотношения (\ref{eq12B}) стремится к 0
при $\varepsilon\rightarrow 0$ и выбранной функции $\eta.$ Но тогда
из (\ref{eq12B}) неизбежно следует, что
$\int\limits_{\varepsilon}^{\varepsilon_0}\frac{dr}{r^{\frac{n-1}{\alpha-1}}\
\widetilde{q}_{x_0}^{\frac{1}{\alpha-1}}(r)}\rightarrow \infty$ при
$\varepsilon\rightarrow 0.$ Повторяя рассуждения от начала
доказательства до соотношений (\ref{eq5A}) и (\ref{eq8A}), мы
заключаем из этих соотношений, что снова
$\widetilde{M}_{p/(n-1)}(\Sigma_m)\rightarrow\infty$ при
$m\rightarrow\infty.$ Однако, в таком случае, из (\ref{eq6A})
следует, что $M_{\alpha}(\Gamma(f(\sigma_m), E,
D^{\,\prime}))\rightarrow 0$ при $m\rightarrow\infty,$ что
противоречит неравенству (\ref{eq7B}). Полученное противоречие
опровергает предположение, что предельное множество $C(f, P)$
состоит более чем из одной точки. Таким образом, утверждение теоремы
о возможности непрерывного продолжения отображения до отображения
$f:\overline{D}_P\rightarrow \overline{D^{\,\prime}}_P$ в случае 2)
также установлено.

\medskip
Для завершения доказательства необходимо показать равенство
$f(\overline{D}_P)=\overline{D^{\,\prime}}.$ Очевидно,
$f(\overline{D}_P)\subset\overline{D^{\,\prime}}.$ Покажем обратное
включение. Пусть $\zeta_0\in \overline{D^{\,\prime}}.$ Если
$\zeta_0$ -- внутренняя точка области $D^{\,\prime},$ то, очевидно,
существует $\xi_0\in D$ так, что $f(\xi_0)=\zeta_0$ и, значит,
$\zeta_0\in f(D).$ Пусть теперь $\zeta_0\in
\partial D^{\,\prime},$ тогда найдётся последовательность $\zeta_m\in D^{\,\prime},$
$\zeta_m=f(\xi_m),$ $\xi_m\in D,$ такая, что $\zeta_m\rightarrow
\zeta_0$ при $m\rightarrow\infty.$ Поскольку $\overline{D}_P$ --
компакт (см. замечания, сделанные перед формулировкой теоремы
\ref{th3}), то можно считать, что $\xi_m\rightarrow P_0,$ где $P_0$
-- некоторый простой конец в $\overline{D}_P.$ Тогда также
$\zeta_0\in f(\overline{D}_P).$ Включение
$\overline{D^{\,\prime}}\subset f(\overline{D}_P)$ доказано и,
значит, $f(\overline{D}_P)=\overline{D^{\,\prime}}.$ Теорема
доказана.~$\Box$
\end{proof}

\medskip
Доказательство следующей леммы аналогично доказательству
\cite[теорема~5]{KRSS} и потому опускается.

\medskip
\begin{lemma}{}\label{thOS4.1} {\sl\, Пусть $D$ -- область в ${\Bbb R}^n,$
$n\geqslant 2,$ $\varphi:(0,\infty)\rightarrow (0,\infty)$ --
неубывающая функция, удовлетворяющая условию (\ref{eqOS3.0a}).
Если $p>n-1,$ то каждое открытое дискретное отображение
$f:D\rightarrow {\Bbb R}^n$ с конечным искажением класса
$W^{1,\varphi}_{loc}$ такое, что $N(f, D)<\infty,$ является нижним
$Q$-отображением относительно $p$-модуля в каждой точке
$x_0\in\overline{D}$ при
$$Q(x)=N(f, D)\cdot K^{\frac{p-n+1}{n-1}}_{I, \alpha}(x, f),$$
$\alpha:=\frac{p}{p-n+1},$ где внутренняя дилатация $K_{I,\alpha}(x,
f)$ отображения $f$ в точке $x$ порядка $\alpha$ определена
соотношением (\ref{eq0.1.1A}), а кратность $N(f, D)$ определена
вторым соотношением в (\ref{eq1.7A}).}
\end{lemma}

\medskip
{\it Доказательство теоремы \ref{th3}}. По лемме \ref{thOS4.1}
отображение $f$ в каждой точке $x_0\in D$ является нижним
$Q$-отображением относительно $p$-модуля в каждой точке
$x_0\in\overline{D}$ при $Q(x)=N(f, D)\cdot
K^{\frac{p-n+1}{n-1}}_{I, \alpha}(x, f),$ $\alpha:=\frac{p}{p-n+1}$
(т.е., $p=\frac{\alpha(n-1)}{\alpha-1}>n-1$), где внутренняя
дилатация $K_{I,\alpha}(x, f)$ отображения $f$ в точке $x$ порядка
$\alpha$ определена соотношением (\ref{eq0.1.1A}), а кратность $N(f,
D)$ определена вторым соотношением в (\ref{eq1.7A}). Тогда
необходимое заключение вытекает из теоремы \ref{th2}, а также того
факта, что максимальная кратность $N(f, D)$ замкнутого открытого
дискретного отображения $f$ конечна (см., напр.,
\cite[лемма~3.3]{MS}).~$\Box$

\medskip
{\bf 4. О граничном поведении ещё одного класса отображений.}
Следующее определение восходит к О. Мартио, В. Рязанову, У. Сребро и
Э. Якубову и имеет важное значение при изучении классов Соболева
(см. \cite{Sev$_1$}). Здесь и далее
\begin{equation}\label{eq49***}
A(x_0, r_1, r_2): =\left\{ x\,\in\,{\Bbb R}^n:
r_1<|x-x_0|<r_2\right\}\,.
\end{equation}
Введём в рассмотрение следующую конструкцию, см. \cite[разд. 7.6
гл~7]{MRSY}.
Пусть $p\geqslant 1,$ $Q$ -- заданная измеримая по Лебегу функция.
Говорят, что отображение $f:D\rightarrow \overline{{\Bbb R}^n}$ есть
{\it кольцевое $Q$-отоб\-ра\-же\-ние в точке $x_0\in \overline{D}$
относительно $p$-модуля,} $x_0\ne \infty,$ если для некоторого
$r_0=r(x_0)$ и произвольных сферического кольца $A=A(x_0,r_1,r_2),$
центрированного в точке $x_0,$ радиусов: $r_1,$ $r_2,$ $0<r_1<r_2<
r_0=r(x_0),$ и любых континуумов $E_1\subset \overline{B(x_0,
r_1)}\cap D,$ $E_2\subset \left(\overline{{\Bbb R}^n}\setminus
B(x_0, r_2)\right)\cap D,$ отображение $f$ удовлетворяет соотношению
\begin{equation}\label{eq2*!}
 M_p\left(f\left(\Gamma\left(E_1,\,E_2,\,D\right)\right)\right)
\leqslant \int\limits_{A} Q(x)\cdot \eta^p(|x-x_0|)\ dm(x)
\end{equation}
для каждой измеримой функции $\eta : (r_1,r_2)\rightarrow [0,\infty
],$ такой что
\begin{equation}\label{eq28*}
\int\limits_{r_1}^{r_2}\eta(r)\ dr\ \ge\ 1\,.
\end{equation}
Соотношению (\ref{eq2*!}), в частности, все открытые дискретные
отображения $f\in W_{loc}^{1, n}(D),$ мера множества точек ветвления
которых равна нулю и внутренняя дилатация $K_I(x, f)$ которых
локально интегрируема (см. \cite[теорема~1 и следствие~2]{Sev$_1$}).
Следующее утверждение для случая гомеоморфизмов на плоскости
доказано в \cite[лемма~5.1]{GRY}. В нашем случае речь идёт о
ситуации пространства ${\Bbb R}^n,$ $n\geqslant 2,$ для отображений
со значительно более общими топологическими свойствами.

\medskip
\begin{lemma}\label{lem1}
{\,\sl Пусть $n\geqslant 2,$ $p\geqslant 1,$ область $D\subset {\Bbb
R}^n$ регулярна, а $D^{\,\prime}\subset {\Bbb R}^n$ ограничена и
имеет локально квазиконформную границу, являющуюся сильно достижимой
относительно $p$-модуля. Пусть также отображение $f:D\rightarrow
D^{\,\prime},$ $D^{\,\prime}=f(D),$ является кольцевым
$Q$-отображением относительно $p$-модуля во всех точках
$x_0\in\partial D,$ кроме того, $f$ является открытым, дискретным и
замкнутым. Тогда $f$ продолжается до непрерывного отображения
$f:\overline{D}_P\rightarrow \overline{D^{\,\prime}}_P,$
$f(\overline{D}_P)=\overline{D^{\,\prime}}_P,$ если найдётся
измеримая по Лебегу функция $\psi:(0, \infty)\rightarrow [0,
\infty]$ такая, что
\begin{equation} \label{eq5B}
I(\varepsilon,
\varepsilon_0):=\int\limits_{\varepsilon}^{\varepsilon_0}\psi(t)dt <
\infty
\end{equation}
при всех $\varepsilon\in (0,\varepsilon_0)$ и, кроме того,
$I(\varepsilon, \varepsilon_0)\rightarrow\infty$ при
$\varepsilon\rightarrow 0,$ и при $\varepsilon\rightarrow 0$
\begin{equation} \label{eq4*}
\int\limits_{\varepsilon<|x-x_0|<\varepsilon_0}Q(x)\cdot\psi^p(|x-x_0|)
\ dm(x)\,=\,o\left(I^p(\varepsilon, \varepsilon_0)\right)\,.
\end{equation}}
\end{lemma}

\begin{proof}
Так как $D^{\,\prime}$ имеет локально квазиконформную границу, то
$\overline{D^{\,\prime}}_P=\overline{D^{\,\prime}}$ (см.
\cite[теорема~4.1]{Na}). В силу метризуемости пространства
$\overline{D}_P$ достаточно доказать, что для каждого простого конца
$P$ области $D$ предельное множество
$$L=C(f, P):=\left\{y\in{{\Bbb
R}^n}:y=\lim\limits_{k\rightarrow\infty}f(x_k),x_k\rightarrow
P,x_k\in D\right\}$$ состоит из единственной точки $y_0\in\partial
D^{\,\prime}$. Заметим, что $L\ne\varnothing$ в силу компактности
множества $\overline{D^{\,\prime}}$, и $L$ является подмножеством
$\partial D^{\,\prime}$ ввиду замкнутости отображения $f$ (см.
\cite[теорема~3.3]{Vu$_1$}). Предположим, что существуют, по крайней
мере, две точки $y_0$ и $z_0\in L$. То есть, найдётся не менее двух
последовательностей $x_k, x_k^{\,\prime}\in D,$ таких, что
$x_k\rightarrow P$ и $x^{\,\prime}_k\rightarrow P$ при
$k\rightarrow\infty,$ и при этом, $f(x_k)\rightarrow y_0$ и
$f(x^{\,\prime}_k)\rightarrow z_0$ при $k\rightarrow\infty.$ В силу
\cite[лемма~2]{KR} каждый простой конец $P$ регулярной области $D$ в
${\Bbb R}^n$, $n\geqslant 2,$ содержит цепь разрезов $\sigma_k$,
лежащую на сферах $S_k$ с центром в некоторой точке $x_0\in\partial
D$ и с евклидовыми радиусами $r_k\rightarrow 0$ при
$k\rightarrow\infty$. Пусть $D_k$ -- области, ассоциированные с
разрезами $\sigma_k$, $k=1,2,\ldots$. Не ограничивая общности
рассуждений, переходя к подпоследовательности, если это необходимо,
мы можем считать, что $x_k, x_k^{\,\prime}\in D_k.$ В самом деле,
так как последовательности $x_k$ и $x_k^{\,\prime}$ сходятся к
простому концу $P,$ найдётся номер $k_1\in {\Bbb N}$ такой, что
$x_{k_1}, x_{k_1}^{\,\prime}\in D_1.$ Далее, найдётся номер $k_2\in
{\Bbb N},$ $k_2>k_1,$ такой, что $x_{k_2}, x_{k_2}^{\,\prime}\in
D_2.$ И так далее. Вообще, на $m$-м шаге мы найдём номер $k_m\in
{\Bbb N},$ $k_m>k_{m-1},$ такой, что $x_{k_m}, x_{k_m}^{\,\prime}\in
D_m.$ Продолжая этот процесс, мы получим две последовательности
$x_{k_m}$ и $x^{\,\prime}_{k_m},$ принадлежащие области $D_m,$
сходящиеся к $P$ при $m\rightarrow\infty$ и такие, что
$f(x_{k_m})\rightarrow y_0$ и $f(x^{\,\prime}_{k_m})\rightarrow y_0$
при $m\rightarrow\infty.$ Переобозначая, если это необходимо,
$x_{k_m}\mapsto x_m,$ мы получаем последовательность $x_m$ с
требуемыми свойствами.

Отметим, что $y_0$ и $z_0\in
\partial D^{\,\prime},$ поскольку по условию $C(f,
\partial D)\subset \partial D^{\,\prime}.$ По определению сильно достижимой границы в
точке $y_0\in \partial D^{\,\prime},$ для любой окрестности $U$ этой
точки найдутся компакт $C_0^{\,\prime}\subset D^{\,\prime},$
окрестность $V$ точки $y_0,$ $V\subset U,$ и число $\delta>0$ такие,
что
\begin{equation}\label{eq1B}
M_p(\Gamma(C_0^{\,\prime}, F, D^{\,\prime}))\ge \delta
>0
\end{equation} для произвольного континуума
$F,$ пересекающего $\partial U$ и $\partial V.$ В силу предположения
$C(f,
\partial D)\subset \partial D^{\,\prime},$ имеем, что для
$C_0:=f^{\,-1}(C_0^{\,\prime})$ выполнено условие $C_0\cap \partial
D=\varnothing$ (см. \cite[теорема~3.3]{Vu$_1$}). Поскольку
$I(P)=\bigcap\limits_{m=1}\limits^{\infty}\overline{D_m}\subset
\partial D$ (см. \cite[предложение~1]{KR}), то не ограничивая общности рассуждений, можно считать, что
$C_0\cap\overline{D_k}=\varnothing$ для каждого $k\in {\Bbb N.}$
Соединим точки $x_k$ и $x_k^{\,\prime}$ кривой $\gamma_k,$ лежащей в
$D_k.$ Заметим, что $f(x_k)\in V$ и $f(x_k^{\,\prime})\in D\setminus
\overline{U}$ при всех достаточно больших $k\in {\Bbb N}.$ В таком
случае, найдётся номер $k_0\in {\Bbb N},$ такой, что согласно
(\ref{eq1B})
\begin{equation}\label{eq2}
M_p(\Gamma(C_0^{\,\prime}, |f(\gamma_k)|, D^{\,\prime}))\ge \delta
>0
\end{equation}
при всех $k\ge k_0\in {\Bbb N}.$ Обозначим через $\Gamma_k$
семейство всех полуоткрытых кривых $\beta:[a, b)\rightarrow {\Bbb
R}^n$ таких, что $\beta(a)\in |f(\gamma_k)|,$ $\beta(t)\in
D^{\,\prime}$ при всех $t\in [a, b)$ и, кроме того,
$\lim\limits_{t\rightarrow b-0}\beta(t):=B\in C_0^{\,\prime}.$
Очевидно, что
\begin{equation}\label{eq4E}
M_p(\Gamma_k)=M_p\left(\Gamma\left(C_0^{\,\prime}, |f(\gamma_k)|,
D^{\,\prime}\right)\right)\,.
\end{equation}
При каждом фиксированном $k\in {\Bbb N},$ $k\ge k_0,$ рассмотрим
семейство $\Gamma_k^{\,\prime}$ (полных) поднятий $\alpha:[a,
b)\rightarrow D$ семейства $\Gamma_k$ с началом во множестве
$|\gamma_k|,$ т.е., $f\circ\alpha=\beta,$ $\beta\in\Gamma_k$ и
$\alpha(a)\in |\gamma_k|.$ Поскольку $f$ -- замкнутое отображение,
то оно сохраняет границу (см. \cite[теорема~3.3]{Vu$_1$}) и, значит,
кривая $\alpha$ лежит в $D$ вместе со своим замыканием. Более того,
в силу дискретности отображения $f$ кривая $\alpha$ может быть
продолжена до непрерывной кривой $\overline{\alpha}:[a,
b]\rightarrow D.$ Заметим, что по определению
$\overline{\beta}(b)\in C_0^{\,\prime},$ так что
$\overline{\alpha}(1)\in C_0$ по определению множества $C_0.$
Значит, $\overline{\alpha}\in \Gamma(|\gamma_k|, C_0, D),$ где
$\overline{\alpha}$ обозначает продолженную кривую
$\overline{\alpha}:[a, b]\rightarrow D.$ Погрузим компакт $C_0$ в
некоторый континуум $C_1,$ всё ещё полностью лежащий в области $D$
(см.~\cite[лемма~1]{Smol}). Можно снова считать, что
$C_1\cap\overline{D_k}=\varnothing,$ $k=1,2,\ldots.$ Заметим, что
$\Gamma(|\gamma_k|, C_0, D)>\Gamma(\sigma_k, C_1, D),$ при этом,
$|\gamma_k|$ и $C_0$ -- континуумы в $D,$ а $\sigma_k$ -- разрез
соответствующий области $D_k.$ Поэтому к семейству кривых
$\Gamma(\sigma_k, C_1, D)$ можно применить определение кольцевого
$Q$-отображения (\ref{eq2*!}). В связи с этим, заметим, что
$\sigma_k\in S(x_0, r_k)$ для некоторой точки $x_0\in \partial D$ и
некоторой последовательности $r_k>0,$ $r_k\rightarrow 0$ при
$k\rightarrow\infty$ (см. \cite[лемма~2]{KR}). Здесь, не ограничивая
общности рассуждений, можно считать, что ${\rm dist}\,(x_0,
C_1)>\varepsilon_0.$ Кроме того, заметим, что функция
$$\eta_k(t)=\left\{
\begin{array}{rr}
\psi(t)/I(r_k, \varepsilon_0), &   t\in (r_k,
\varepsilon_0),\\
0,  &  t\in {\Bbb R}\setminus (r_k, \varepsilon_0)\,,
\end{array}
\right. $$
где $I(\varepsilon,
\varepsilon_0):=\int\limits_{\varepsilon}^{\varepsilon_0}\psi(t)dt,$
удовлетворяет условию нормировки вида (\ref{eq28*}). По доказанному
$\Gamma_k^{\,\prime}\subset\Gamma(|\gamma_k|, C_0, D),$ так что
$M_p(f(\Gamma_k^{\,\prime}))\leqslant M_p(f(\Gamma(|\gamma_k|, C_0,
D))).$ Поэтому, в силу определения кольцевого $Q$-отоб\-ра\-же\-ния
в граничной точке относительно $p$-модуля, а также ввиду условий
(\ref{eq5B})--(\ref{eq4*}),
\begin{equation}\label{eq11*}
M_p(f(\Gamma_k^{\,\prime}))\leqslant M_p(f(\Gamma(|\gamma_k|, C_0,
D)))\leqslant M_p(f(\Gamma(\sigma_k, C_1, D))\leqslant \Delta(k)\,,
\end{equation}
где $\Delta(k)\rightarrow 0$ при $k\rightarrow \infty.$ Однако,
$\Gamma_k=f(\Gamma_k^{\,\prime}),$ поэтому из (\ref{eq11*}) получим,
что при $k\rightarrow \infty$
\begin{equation}\label{eq3E}
M_p(\Gamma_k)= M_p\left(f(\Gamma_k^{\,\prime})\right)\leqslant
\Delta(k)\rightarrow 0\,.
\end{equation}
Однако, соотношение (\ref{eq3E}) вместе с равенством (\ref{eq4E})
противоречат неравенству (\ref{eq2}), что и доказывает лемму.~$\Box$
\end{proof}

\medskip
{\it Доказательство теоремы \ref{th4}}. В первом случае полагаем
$\psi(r)=\frac{1}{r^{\frac{n-1}{p-1}}q_{x_0}^{\frac{1}{p-1}}(r)}$
при $r\in (0, \varepsilon_0)$ и $\psi(r)=0$ при $r>\varepsilon_0.$ В
обозначениях леммы \ref{lem1} $I:=I(\varepsilon,
\varepsilon_0)=\int\limits_{\varepsilon}^{\varepsilon_0}\
\frac{dr}{r^{\frac{n-1}{p-1}}q_{x_0}^{\frac{1}{p-1}}(r)},$ где
$q_{x_0}(r):=\frac{1}{\omega_{n-1}r^{n-1}}\int\limits_{|x-x_0|=r}Q(x)\,d{\mathcal
H}^{n-1},$ Тогда
\begin{equation}\label{eq10B}
\frac{\omega_{n-1}}{I^{p-1}}=\frac{1}{I^p}\int\limits_{\varepsilon<|x-x_0|<\varepsilon_0}
Q(x)\cdot \psi^p(|x-x_0|)\ dm(x)\rightarrow 0, \quad
\varepsilon\rightarrow 0\,.
\end{equation}
Из (\ref{eq10B}) с учётом (\ref{eq10A}) вытекает, что оба условия
(\ref{eq5B})--(\ref{eq4*}) выполнены и, таким образом, случай 1)
вытекает непосредственно из леммы \ref{lem1}.

Рассмотрим случай 2). Заметим, что для функций класса $FMO$ в точке
$x_0$ выполнено условие (\ref{eq31*}) при  $\varepsilon \rightarrow
0 $ и для некоторого $e_0>0,$ $e_0 \leqslant {\rm
dist}\,\left(0,\partial D\right).$ При $\varepsilon_0<e_0$ полагаем
$\psi(t):=\frac{1}{\left(t\,\log{\frac1t}\right)^{n/{p}}},$
$I(\varepsilon,
\varepsilon_0):=\int\limits_{\varepsilon}^{\varepsilon_0}\psi(t) dt
\geqslant \log{\frac{\log{\frac{1}
{\varepsilon}}}{\log{\frac{1}{\varepsilon_0}}}}$ и
$\eta(t):=\psi(t)/I(\varepsilon, \varepsilon_0).$ Заметим, что
$\int\limits_{\varepsilon}^{\varepsilon_0}\eta(t)dt=1,$ кроме того,
из соотношения (\ref{eq31*}) вытекает, что
\begin{equation}\label{eq32*}
\frac{1}{I^{p}(\varepsilon,
\varepsilon_0)}\int\limits_{\varepsilon<|x|<\varepsilon_0}
Q(x+x_0)\cdot\psi^{p}(|x|)
 \ dm(x)\leqslant C\left(\log\log\frac{1}{\varepsilon}\right)^{1-{p}}\rightarrow
 0
 \end{equation}
при $\varepsilon\rightarrow 0.$ Итак, из (\ref{eq32*}) вытекает
выполнение условий (\ref{eq5B})--(\ref{eq4*}) леммы \ref{lem1},
откуда и следует случай 2). Теорема доказана.~$\Box$

\medskip
{\bf 5. О равностепенной непрерывности отображений в замыкании
области с сильно достижимой границей.} Наша ближайшая цель --
установить свойство равностепенной непрерывности семейств
отображений, удовлетворяющих оценкам вида (\ref{eq1A}), а также
классов Орлича--Соболева $W_{loc}^{1, \varphi}.$ Речь идёт о
равностепенной непрерывности не только во внутренних точках области,
но и в её замыкании. Всюду далее, если не оговорено противное, $d$
-- одна из метрик в пространстве простых концов в заданной области
$D,$ упомянутых перед формулировкой теоремы \ref{th3}, а
$d^{\,\prime}$ -- евклидова метрика.

\medskip
Перед тем, как переходить к формулировке и доказательству основных
утверждений настоящего раздела, сформулируем следующее утверждение,
доказательство которого аналогично случаю гомеоморфизмов при $p=n$
(см. \cite[следствие~5]{KRSS}).

\medskip
\begin{theorem}\label{th4A}
{\,\sl Пусть $x_0\in \partial D,$ ограниченное отображение
$f:D\rightarrow {\Bbb R}^n$ является нижним $Q$-гомеоморфизмом
относительно $p$-модуля в области $D\subset{\Bbb R}^n,$ $Q\in
L_{loc}^{\frac{n-1}{p-n+1}}({\Bbb R}^n),$ $p>n-1$ и
$\alpha:=\frac{p}{p-n+1}.$ Тогда $f$ является кольцевым
$Q^{\frac{n-1}{p-n+1}}$-гомеоморфизмом в этой же точке.
 }
\end{theorem}

\medskip
\begin{proof} Зафиксируем $\varepsilon_0\in(0,d_0),$ $d_0=\sup\limits_{x\in
D}|x-x_0|.$ Пусть $\varepsilon\in(0, \varepsilon_0)$ и пусть
континуумы $C_1$ и $C_2$ удовлетворяют условиям  $C_1\subset
\overline{B(x_0, \varepsilon)\cap D}$ и $C_2\subset D\setminus
B(x_0, \varepsilon_0).$ Рассмотрим семейство множеств
$\Gamma_{\varepsilon}:=\bigcup\limits_{r\in (\varepsilon,
\varepsilon_0)}\{f(S(x_0, r)\cap D)\}.$ Заметим, что множество
$\sigma_r:=f(S(x_0, r)\cap D)$ замкнуто в $f(D)$ как гомеоморфный
образ замкнутого множества $S(x_0, r)\cap D$ в $D.$ Кроме того,
заметим, что $\sigma_r$ при $r\in (\varepsilon, \varepsilon_0)$
отделяет $f(C_1)$ от $f(C_2)$ в $f(D),$ поскольку
$$f(C_1)\subset f(B(x_0, r)\cap D):=A,\quad
f(C_2)\subset f(D)\setminus \overline{f(B(x_0, r)\cap D)}:=B\,,$$
$A$ и $B$ открыты в $f(D)$ и
$$f(D)=A\cup \sigma_r\cup B\,.$$

\medskip
Пусть $\Sigma_{\varepsilon}$ -- семейство всех множеств, отделяющих
$f(C_1)$ от $f(C_2)$ в $f(D).$ Пусть $\rho^{n-1}\in \widetilde{{\rm
adm}}\bigcup\limits_{r\in (\varepsilon, \varepsilon_0)} f(S(x_0,
r)\cap D)$ в смысле соотношения (\ref{eq13.4.13}), тогда также
$\rho\in {\rm adm}\bigcup\limits_{r\in (\varepsilon, \varepsilon_0)}
f(S(x_0, r)\cap D)$ в смысле соотношения (\ref{eq8.2.6}) при
$k=n-1.$ Следовательно, так как
$\widetilde{M}_{q}(\Sigma_{\varepsilon})\geqslant
M_{q(n-1)}(\Sigma_{\varepsilon})$ при произвольном $q\geqslant 1,$
то
$$\widetilde{M}_{p/(n-1)}(\Sigma_{\varepsilon})\geqslant$$
\begin{equation}\label{eq5AA}
\geqslant \widetilde{M}_{p/(n-1)}\left(\bigcup \limits_{r\in
(\varepsilon, \varepsilon_0)} f(S(x_0, r)\cap D)\right)\geqslant
M_p\left(\bigcup\limits_{r\in (\varepsilon, \varepsilon_0)} f(S(x_0,
r)\cap D)\right)\,.
\end{equation}
Однако, ввиду (\ref{eq3}) и (\ref{eq4}),
$$
\widetilde{M}_{p/(n-1)}(\Sigma_{\varepsilon})=\frac{1}{(M_{\alpha}(\Gamma(f(C_1),
f(C_2), f(D))))^{1/(\alpha-1)}}\,, \alpha=p/(p-n+1)\,.
$$
По лемме \ref{lem4}
$$M_p\left(\bigcup\limits_{r\in (\varepsilon, \varepsilon_0)} f(S(x_0,
r)\cap D)\right)\geqslant
$$
\begin{equation}\label{eq8B}
\geqslant \int\limits_{\varepsilon}^{\varepsilon_0}
\frac{dr}{\Vert\,Q\Vert_{s}(r)}=
\int\limits_{\varepsilon}^{\varepsilon_0}
\frac{dt}{\omega^{\frac{p-n+1}{n-1}}_{n-1}
t^{\frac{n-1}{\alpha-1}}\widetilde{q}_{x_0}^{\,\frac{1}{\alpha-1}}(t)}\quad\forall\,\,
i\in {\Bbb N}\,, s=\frac{n-1}{p-n+1}\,,\end{equation} где
$\Vert
Q\Vert_{s}(r)=\left(\int\limits_{D(x_0,r)}Q^{s}(x)\,d{\mathcal{A}}\right)^{\frac{1}{s}}$
-- $L_{s}$-норма функции $Q$ над сферой $S(x_0,r)\cap D,$ а
$\widetilde{q}_{x_0}(r)$ -- её среднее значение над этой сферой.
Тогда из (\ref{eq5AA})--(\ref{eq8B}) вытекает, что
\begin{equation}\label{eq9C}
M_{\alpha}(\Gamma(f(C_1), f(C_2), f(D)))\leqslant
\frac{\omega_{n-1}}{I^{\alpha-1}}\,,
\end{equation}
где $I=\int\limits_{\varepsilon}^{\varepsilon_0}\
\frac{dr}{r^{\frac{n-1}{\alpha-1}}\widetilde{q}_{x_0}^{\frac{1}{\alpha-1}}(r)}.$
Заметим, что $f(\Gamma(C_1,C_2, D))=\Gamma(f(C_1), f(C_2), f(D)),$
так что из (\ref{eq9C}) вытекает, что
$$
M_{\alpha}(f(\Gamma(C_1,C_2, D)))\leqslant
\frac{\omega_{n-1}}{I^{\alpha-1}}\,.
$$
Завершает доказательство применение предложения \ref{pr1A}.~$\Box$
\end{proof}

Ниже мы ограничимся ситуацией, когда все рассматриваемые отображения
являются гомеоморфизмами. Для $n-1<p,$ областей $D,$
$D^{\,\prime}\subset {\Bbb R}^n,$ $z_1, z_2\in D,$ $z_1\ne z_2,$
$z_1^{\prime},$ $z_2^{\prime}\in D^{\prime}$ и произвольной
измеримой по Лебегу функции $Q$ обозначим символом $\frak{R}_{Q,
p}^{z_1, z_2, z_1^{\,\prime}, z_2^{\,\prime}}(D, D^{\,\prime})$
семейство всех нижних кольцевых $Q$-гомеоморфизмов $f:D\rightarrow
D^{\,\prime}$ относительно $p$-модуля в $\overline{D},$
$f(D)=D^{\,\prime},$ таких что
\begin{equation}\label{eq8***} f(z_1)=z_1^{\prime},\quad
f(z_2)=z_2^{\prime}\,.\end{equation}

\medskip
Имеет место следующее утверждение.

\medskip
\begin{lemma}\label{th5}{\,\sl
Пусть $n\geqslant 2,$ область $D\subset {\Bbb R}^n$ регулярна, а
область $D^{\,\prime}\subset {\Bbb R}^n$ имеет локально
квазиконформную границу, которая является сильно достижимой
относительно $\alpha$-модуля, $\alpha:=\frac{p}{p-n+1}.$
Предположим, $Q\in L_{loc}^{\frac{n-1}{p-n+1}}({\Bbb R}^n),$ и что
для каждого $x_0\in \overline{D}$ найдется $\delta_0=\delta(x_0)>0$,
такое, что при всех $\varepsilon\in (0, \delta_0)$ и некоторой
измеримой по Лебегу функции $\psi:(0, \infty)\rightarrow [0,
\infty]$
\begin{equation} \label{eq5D}
0<I(\varepsilon,
\delta_0):=\int\limits_{\varepsilon}^{\delta_0}\psi(t)dt <
\infty\,,\,\,\, I(\varepsilon, \delta_0)\rightarrow
\infty\quad\text{при}\quad\varepsilon\rightarrow 0\,,
\end{equation}
и, кроме того, при $\varepsilon\rightarrow 0$
\begin{equation} \label{eq4B}
\int\limits_{\varepsilon<|x-x_0|<\delta_0}Q^{s}(x)\cdot\psi^{\,\alpha}(|x-x_0|)
\ dm(x)\,=\,o\left(I^{\,\alpha}(\varepsilon, \delta_0)\right)\,,
\end{equation}
$ \alpha:=\frac{p}{p-n+1}, s=\frac{n-1}{p-n+1}.$

Тогда каждый элемент $f\in \frak{R}_{Q, p}^{z_1, z_2,
z_1^{\,\prime}, z_2^{\,\prime}}(D, D^{\,\prime})$ продолжается до
непрерывного отображения $\overline f\colon\overline
D_P\rightarrow\overline{D^{\,\prime}}_P$, при этом, $\frak{R}_{Q,
p}^{z_1, z_2, z_1^{\,\prime}, z_2^{\,\prime}}(\overline{D}_P,
\overline{D^{\,\prime}}_P),$ состоящее из всех продолженных таким
образом отображений, является равностепенно непрерывным, а значит, и
нормальным  в $\overline{D}_P$.}
\end{lemma}

\begin{proof} Во внутренних точках области $D$ семейство 
$\frak{R}_{Q, p}^{z_1, z_2, z_1^{\,\prime}, z_2^{\,\prime}}(D,
D^{\,\prime})$ является равностепенно непрерывным по следующим
соображениям: каждое отображение $f\in\frak{R}_{Q, p}^{z_1, z_2,
z_1^{\,\prime}, z_2^{\,\prime}}(D, D^{\,\prime})$ является так
называемым $Q^{\frac{n-1}{p-n+1}}$-гомеоморфизмом относительно
$\alpha$-модуля, $\alpha:=\frac{p}{p-n+1}$ (см.
\cite[теорема~13.1]{RSS} при $p=n$ и \cite[теорема~7.1]{GS}), а
семейства таких отображений (фиксирующих две и более точек при
$p=n$, и произвольные семейства при $n-1<p<n$) равностепенно
непрерывны при условиях (\ref{eq5D})--(\ref{eq4B}) на функцию $Q$
(см. \cite[лемма~6.1]{RSS} и \cite[лемма 2.4]{GSS}). Возможность
непрерывного граничного продолжения каждого элемента
$f\in\frak{R}_{Q, p}^{z_1, z_2, z_1^{\,\prime}, z_2^{\,\prime}}(D,
D^{\,\prime})$ вытекает из \cite[теорема~2 и соотношение
(37)]{Sev$_3$}.

Покажем равностепенную непрерывность семейства $\frak{R}_{Q,
p}^{z_1, z_2, z_1^{\,\prime}, z_2^{\,\prime}}(\overline{D}_P,
\overline{D^{\,\prime}}_P)$ в точках $E_D,$ где $E_D$ --
пространство простых концов в области $D.$  Не ограничивая общности,
можно считать, что $\overline{D^{\prime}}_P=\overline{D^{\prime}}.$
Предположим противное, а именно, что $\frak{R}_{Q, p}^{z_1, z_2,
z_1^{\,\prime}, z_2^{\,\prime}}(\overline{D}_P,
\overline{D^{\,\prime}}_P)$ не является равностепенно непрерывным в
некоторой точке $P_0\in E_D.$
Тогда найдутся число $a>0,$ последовательность $P_k\in
\overline{D}_P,$ $k=1,2,\ldots$ и элементы $f_k\in\frak{R}_{Q,
p}^{z_1, z_2, z_1^{\,\prime}, z_2^{\,\prime}}(\overline{D}_P,
\overline{D^{\,\prime}}_P)$ такие, что $d(P_k, P_0)<1/k$ и
\begin{equation}\label{eq3C}
|f_k(P_k)-f_k(P_0)|\geqslant a\quad\forall\quad k=1,2,\ldots,\,.
\end{equation}
Ввиду возможности непрерывного продолжения каждого $f_k$ на границу
$D$ в терминах простых концов, для всякого $k\in {\Bbb N}$ найдётся
элемент $x_k\in D$ такой, что $d(x_k, P_k)<1/k$ и
$|f_k(x_k)-f_k(P_k)|<1/k.$ Тогда из (\ref{eq3C}) вытекает, что
\begin{equation}\label{eq4C}
|f_k(x_k)-f_k(P_0)|\geqslant a/2\quad\forall\quad k=1,2,\ldots,\,.
\end{equation}
Аналогично, в силу непрерывного продолжения отображения $f_k$ в
$\overline{D}_P$ найдётся последовательность $x_k^{\,\prime}\in D,$
$x_k^{\,\prime}\rightarrow P_0$ при $k\rightarrow \infty$ такая, что
$|f_k(x_k^{\,\prime})-f_k(P_0)|<1/k$ при $k=1,2,\ldots\,.$ Тогда из
(\ref{eq4C}) вытекает, что
$$
|f_k(x_k)-f_k(x_k^{\,\prime})|\geqslant a/4\quad\forall\quad
k=1,2,\ldots\,,
$$
где последовательности $x_k$ и $x_k^{\,\prime}$ принадлежат $D$ и
сходятся к простому концу $P_0$ при $k\rightarrow\infty.$ В силу
компактности множества $\overline{D^{\,\prime}}$ последовательность
$f_k(P_0)$ имеет сходящуюся подпоследовательность $f_{k_l}(P_0),$
которая сходится к некоторой точке $y_0\in \partial D^{\,\prime}$
при $l\rightarrow \infty.$ Поскольку
$|f_k(x_k^{\,\prime})-f_k(P_0)|<1/k$ при $k=1,2,\ldots\,,$ мы
получим, что $f_{k_l}(x_{k_l}^{\,\prime})\rightarrow y_0$ при
$l\rightarrow \infty.$ Не ограничивая общности рассуждений, можно
считать, что сама последовательность $f_k(x_k^{\,\prime})$ сходится
при $k\rightarrow \infty$ к $y_0.$

Положим $U=B(y_0,r_0)$, где $0<r_0<a/4$. В силу \cite[лемма~2]{KR}
простой конец $P_0$ регулярной области $D$ в ${\Bbb R}^n$,
$n\geqslant 2,$ содержит цепь разрезов $\sigma_k$, лежащую на сферах
$S_k$ с центром в некоторой точке $x_0\in\partial D$ и с евклидовыми
радиусами $r_k\rightarrow 0$ при $k\rightarrow\infty$. Пусть $D_k$
-- области, ассоциированные с разрезами $\sigma_k$, $k=1,2,\ldots$.
Поскольку последовательности $x_k$ и $x_k^{\,\prime}$ сходятся к
простому концу $P_0$ при $k\rightarrow\infty,$ мы можем считать, что
точки $y_k=f_k(x_k)$ и $y^{\,\prime}_k=f_k(x^{\,\prime}_k)$
принадлежат области $D_{k}^{\,\prime}=f(D_{k}).$ Соединим точки
$y_k$ и $y_k^{\,\prime}$ кривой $C_k,$ полностью лежащей в
$D_k^{\,\prime}.$ Заметим, что по построению $\partial U\cap
|C_k|\ne\varnothing$ (как обычно, $|C_k|$ обозначает носитель кривой
$C_k$).

Поскольку область $D^{\,\prime}$ сильно достижима относительно
$\alpha$-модуля, существует континуум $E\subset D^{\,\prime}$ и
число $\delta>0$, такие, что
\begin{equation}\label{eq1E}
M_{\alpha}(\Gamma(E, |C_k|, D^{\,\prime}))\geqslant\delta
\end{equation}
для всех достаточно больших $k$.

Без ограничения общности можем считать, что последнее условие
выполнено для всех $k=1,2,\ldots$. Заметим, что поскольку семейство
отображений $\frak{R}_{Q, p}^{z_1, z_2, z_1^{\,\prime},
z_2^{\,\prime}}(D, D^{\,\prime})$ равностепенно непрерывно в области
$D,$ а $\overline{D^{\,\prime}}$ является компактом, то
$\frak{R}_{Q, p}^{z_1, z_2, z_1^{\,\prime}, z_2^{\,\prime}}(D,
D^{\,\prime})$ нормально в этой области (см.
\cite[теорема~20.4]{Va}). Следовательно, не ограничивая общности,
можно считать, что последовательность $f_k$ сходится локально
равномерно к некоторому непрерывному отображению $f,$ более того,
предельное отображение $f$ является гомеоморфизмом или постоянной в
$D$ (см. \cite[теорема~1]{Cr} при $p\ne n$ и \cite[лемма~4.2]{RSS}
при $p=n$). Так как $f_m$ удовлетворяют условиям нормировки
(\ref{eq8***}), отображение $f$ является гомеоморфизмом. Тогда также
$f_k^{\,-1}\rightarrow f^{\,-1}$ при $k\rightarrow\infty$ (см.
\cite[лемма~3.1]{RSS}). Ввиду включения $E\subset f(D)$, компакты
$K_k:=f_k^{\,-1}(E)$ при $k\rightarrow\infty$ сходятся к компакту
$f^{\,-1}(E)$ в смысле хаусдорфовой метрики. Тогда при всех
$k\geqslant k_0$ все множества $K_k$ принадлежат некоторой
$\varepsilon$-окрестности компакта $f^{\,-1}(E),$ замыкание которой
мы обозначим через $K_0.$ Можно считать, что $K_0$ -- компакт в $D.$
Учитывая, что $x_0\in\partial D$, имеем
\begin{equation*}
\varepsilon_2:={\rm dist\,}(x_0, K_0)>0.
\end{equation*}
Полагаем $\varepsilon_0:=\min\{\delta_0, \varepsilon_2\}$.
Рассмотрим семейство кривых $\Gamma_k,$ соединяющих множества $K_k$
и $|f_k^{\,-1}(C_k)|$ в области $D.$ Заметим, что
$|f_k^{\,-1}(C_k)|\subset D_k,$ $K_k\subset D\setminus B(x_0,
\varepsilon_0)$ и $K_0\subset D\setminus B(x_0, \varepsilon_0),$ так
что $\Gamma_k>\Gamma(S(x_0, r_k), K_k, D).$ Тогда при всех
$k\geqslant k_0$
$$M_{\alpha}(f_k(\Gamma_k))\leqslant M_{\alpha}(f_k(\Gamma(S(x_0, r_k), K_k,
D)))\leqslant
$$
\begin{equation}\label{eq8BA}
\leqslant M_{\alpha}(f_k(\Gamma(S(x_0, r_k), K_0, D)))\leqslant
\end{equation}
$$\leqslant
\int\limits_{A(x_0, r_k, \varepsilon_0)}Q^{\frac{n-1}{p-n+1}}(x)
\eta^{\,\alpha}(|x-x_0|)\,dm(x)\,, \alpha=\frac{p}{p-n+1}\,,$$
где $A(x_0, \varepsilon, \varepsilon_0)=\{x\in {\Bbb R}^n:
\varepsilon<|x-x_0|<\varepsilon_0\}$ и $\eta: (r_k,
\varepsilon_0)\rightarrow [0,\infty]$ -- произвольная измеримая по
Лебегу функция такая, что
\begin{equation*}
\int\limits_{r_k}^{\varepsilon_0}\eta(r)dr=1\,.
\end{equation*}
Так как $I(\varepsilon, \varepsilon_0)\rightarrow\infty$ при
$\varepsilon\rightarrow 0,$ то при достаточно малых $r_k>0$ имеем:
$I(r_k, \varepsilon_0)>0.$ Полагаем
\begin{equation*}
\eta(t)=\begin{cases} \psi(t)/I(r_k, \varepsilon_0), &
t\in(r_k, \varepsilon_0),\\
0, & t\notin(r_k, \varepsilon_0)\,.
\end{cases}
\end{equation*}
В таком случае,
\begin{equation*}
\int\limits_{r_k}^{\varepsilon_0}\eta(t)\,dt=\frac{1}{I(r_k,
\varepsilon_0)}\int\limits_{r_k}^{\varepsilon_0}\psi(t)\,dt=1.
\end{equation*}
Тогда из (\ref{eq8BA}) с учётом (\ref{eq4B}) следует, что при
$k\rightarrow\infty$
\begin{equation}\label{eq9A}
M_{\alpha}(f_k(\Gamma_k))\rightarrow 0\,.
\end{equation}
Однако, $f_k(\Gamma_k)=\Gamma(E, |C_k|, D^{\,\prime}),$ откуда из
(\ref{eq9A}) мы получаем, что
$$
M_{\alpha}(\Gamma(E, |C_k|, D^{\,\prime}))\rightarrow
0\quad\text{при}\quad k\rightarrow\infty\,,
$$
что противоречит соотношению (\ref{eq1E}). Полученное противоречие
указывает на то, что изначальное предположение об отсутствии
равностепенной непрерывности рассматриваемого семейства было
неверным.~$\Box$
\end{proof}

\medskip
Из леммы \ref{th5} мы получаем следующее утверждение.

\medskip
\begin{theorem}\label{th6}{\,\sl
Пусть $n\geqslant 2,$ область $D\subset {\Bbb R}^n$ регулярна, а
область $D^{\,\prime}\subset {\Bbb R}^n$ имеет локально
квазиконформную границу, которая является сильно достижимой
относительно $\alpha$-модуля, $\alpha:=\frac{p}{p-n+1}.$
Предположим, $Q\in L_{loc}^{\frac{n-1}{p-n+1}}({\Bbb R}^n)$ и
выполнено одно из следующих условий:

1) либо в каждой точке $x_0\in \overline{D}$ при некотором
$\varepsilon_0=\varepsilon_0(x_0)>0$ и всех
$0<\varepsilon<\varepsilon_0$
$$
\int\limits_{\varepsilon}^{\varepsilon_0}
\frac{dt}{t^{\frac{n-1}{\alpha-1}}\widetilde{q}_{x_0}^{\,\frac{1}{\alpha-1}}(t)}<\infty\,,\qquad
\int\limits_{0}^{\varepsilon_0}
\frac{dt}{t^{\frac{n-1}{\alpha-1}}\widetilde{q}_{x_0}^{\,\frac{1}{\alpha-1}}(t)}=\infty\,,
$$
где
$\widetilde{q}_{x_0}(r):=\frac{1}{\omega_{n-1}r^{n-1}}\int\limits_{|x-x_0|=r}Q^{\frac{n-1}{p-n+1}}(x)\,d{\mathcal
H}^{n-1};$

2) либо $Q^{\frac{n-1}{p-n+1}}\in FMO(\overline{D}).$

Тогда каждый элемент $f\in \frak{R}_{Q, p}^{z_1, z_2,
z_1^{\,\prime}, z_2^{\,\prime}}(D, D^{\,\prime})$ продолжается до
непрерывного отображения $\overline f\colon\overline
D_P\rightarrow\overline {D^{\,\prime}_P}$, при этом, семейство
отображений $\frak{R}_{Q, p}^{z_1, z_2, z_1^{\,\prime},
z_2^{\,\prime}}(\overline{D}_P, \overline{D^{\,\prime}}_P),$
состоящее из всех продолженных таким образом отображений, является
равностепенно непрерывным, а значит, и нормальным  в
$\overline{D}_P$.}
\end{theorem}

\medskip
{\it Доказательство} вытекает из леммы \ref{th5} по аналогии с
доказательством теоремы \ref{th4}.~$\Box$

\medskip
{\it Доказательство теоремы \ref{th7}.} По лемме \ref{thOS4.1}
каждое $f\in \frak{F}_{ \varphi, Q, \alpha}^{z_1, z_2,
z_1^{\,\prime}, z_2^{\,\prime}}(D, D^{\,\prime})$ является нижним
$B$-отображением относительно $p$-модуля при $B(x)=
Q^{\frac{p-n+1}{n-1}}(x, f),$ где $p$ находится из условия
$\alpha=\frac{p}{p-n+1}.$ Однако, относительно $B(x)$ выполнены
условия 1) и 2) теоремы \ref{th6}, поскольку
$B^{\frac{n-1}{p-n+1}}(x)=Q(x),$ где $Q$ удовлетворяет соотношениям
1)-2) теоремы \ref{th7}. Оставшаяся часть утверждения вытекает из
теоремы \ref{th6}.~$\Box$

\medskip
{\bf 6. О равностепенной непрерывности нижних $Q$-отображений в
$QED$-об\-лас\-тях.} В предыдущем параграфе речь шла об
отображениях, фиксирующих два внутренних значения $z_1, z_2\in D.$
Значительно важнее, однако, было бы получить результаты в том
случае, когда фиксируется только одна внутренняя точка заданной
области. Сказанное иллюстрируется простым фактом из теории
конформных отображений: например, хорошо известно, что существует и
единственно отображение единичного круга на себя, фиксирующее одну
внутреннюю и одну граничную точку единичного круга; однако, о
существовании отображений, фиксирующих две и более внутренние точки,
вообще говоря, ничего нельзя сказать (см. \cite[пункт 10, $\S\, 3,$
гл.~I]{Shab}). В дальнейшем мы для простоты ограничимся случаем
$p=n,$ где $p$ -- порядок модуля семейств кривых.

\medskip
Напомним, что {\it сферическое (хордальное)} расстояние между
точками $x$ и $y$  в $\overline{{\Bbb R}^n}={\Bbb
R}^n\cup\{\infty\}$ есть величина
$$
h(x,\infty)=\frac{1}{\sqrt{1+{|x|}^2}}\,,\,\,
h(x,y)=\frac{|x-y|}{\sqrt{1+{|x|}^2} \sqrt{1+{|y|}^2}},\ \ x\ne
\infty\ne y\,.
$$
{\it Хордальным диаметром} множества $E\,\subset\,\overline{{\Bbb
R}^n}$ называется величина
\begin{equation}\label{eq47***}
h(E)\,=\,\sup\limits_{x\,,y\,\in\,E}\,h(x,y)\,.
\end{equation}

\begin{proposition}\label{pr9}{\,\sl Для любых непересекающихся невырожденных
континуумов $B$ и $F$ в $\overline{{\Bbb R}^n}$ и некоторой
постоянной $\lambda_n>0$ имеет место соотношение:
$$M\left(\Gamma\left(F, B, \overline{{\Bbb R}^n} \right)\right)\ \geqslant\
\frac {\omega_{n-1}}{\,\,\,\,{\left[\log
 \frac{2\lambda_n^2}{h (F)h
(B)}\right]}^{n-1}}\,,$$
см. \cite[(7.29), разд. 7.4, гл. 7]{MRSY}.
 }\end{proposition}

\medskip
Имеет место следующее утверждение, обобщающее
\cite[лемма~3.1]{Sev$_3$} в случае не локально связных границ.

\medskip
\begin{lemma}\label{lem3}
{\,\sl Пусть область $D$ регулярна, область $D^{\,\prime}$
ограничена, имеет локально квазиконформную границу и, одновременно,
является $QED$-областью. Пусть также $P_0$ -- некоторый простой
конец в $E_D,$ а $\sigma_m,$ $m=1,2,\ldots,$ -- соответствующая ему
цепь разрезов, лежащих на сферах с центром в некоторой точке $x_0\in
\partial D$ и радиусов $r_m\rightarrow 0,$ $m\rightarrow\infty.$
Пусть $D_m$ -- соответствующая $P_0$ последовательность
ассоциированных областей, а $C_m$ -- произвольная последовательность
континуумов, принадлежащих $D_m.$

Предположим, $Q\in L_{loc}^{n-1}({\Bbb R}^n),$ $f:D\rightarrow
D^{\,\prime}$ -- нижний $Q$-гомеоморфизм, $f(D)=D^{\,\prime},$ такой
что $b_0^{\,\prime}=f(b_0)$ для некоторых $b_0\in D$ и
$b_0^{\,\prime}\in D^{\,\prime}.$ Пусть также найдётся
$\varepsilon_0=\varepsilon(x_0)>0,$
такое, что при некотором $0<p<n$ выполнено условие
\begin{equation}\label{eq5***}
\int\limits_{A(x_0, \varepsilon, \varepsilon_0)}
Q^{n-1}(x)\cdot\psi^{\,n}(|x-x_0|)\,dm(x) \leqslant K\cdot
I^p(\varepsilon, \varepsilon_0)\,,
\end{equation}
где сферическое кольцо $A(x_0, \varepsilon, \varepsilon_0)$
определено как в (\ref{eq49***}), а $\psi$ -- некоторая
неотрицательная измеримая функция, такая, что при всех
$\varepsilon\in(0, \varepsilon_0)$
\begin{equation}\label{eq7***}
I(\varepsilon,
\varepsilon_0):=\int\limits_{\varepsilon}^{\varepsilon_0}\psi(t)\,dt
< \infty\,,
\end{equation}
при этом, $I(\varepsilon, \varepsilon_0)\rightarrow \infty$ при
$\varepsilon\rightarrow 0.$

Тогда найдётся число
$\widetilde{\varepsilon_0}=\widetilde{\varepsilon_0}(x_0)\in (0,
\varepsilon_0)$  и $M_0\in {\Bbb N}$ такие, что
\begin{equation}
\label{eq3.10} h(f(C_m))\leqslant \frac{\alpha_n}{\delta}\,
\exp\left\{-\beta_n I^{\gamma_{n,p}}(r_m, \varepsilon_0)\cdot
(\alpha(r_m))^{-1/(n-1)}\right\}\qquad \forall\,m\geqslant M_0\,,
\end{equation}
где величина $h(f(C_m))$ в левой части (\ref{eq3.10}) определена в
(\ref{eq47***}),
\begin{equation}\label{eq1.3}
\alpha(\sigma)=\left(
1+\frac{\int\limits_{\widetilde{\varepsilon_0}}^{\varepsilon_0}\psi(t)\,dt}
{\int\limits_{\sigma}^{\widetilde{\varepsilon_0}}\psi(t)\,dt}\right)^n\,,\end{equation}
$\delta = \frac{1}{2}\cdot h\left(b_0^{\,\prime},
\partial D^{\,\prime}\right),$ $h$ -- хордальное расстояние
между множествами,
$\alpha_n$ ~--- некоторая постоянная, зависящая только от $n,$ $A$
~--- постоянная, участвующая в определении $QED$-об\-лас\-ти
$D^{\,\prime},$ см. (\ref{eq4***}), $\beta_n =
\left(\frac{\omega_{n-1}}{KA}\right)^{\frac{1}{n-1}}$ и степень
$\gamma_{n,p}=1-\frac{p-1}{n-1}.$ }
\end{lemma}

\medskip
\begin{proof} Прежде всего, в сделанных выше обозначениях положим $\eta_{\varepsilon}(t)=\psi(t)/I(\varepsilon, \varepsilon_0).$
Тогда при каждом $\varepsilon\in(0, \varepsilon_0)$ согласно
предложению \ref{pr1A} (см. также \cite[лемма~7.4]{MRSY})
\begin{equation}\label{eq2A}
\frac{\omega_{n-1}}{J^{n-1}}\leqslant \frac{1}{I^n(\varepsilon,
\varepsilon_0)}\int\limits_{A(x_0, \varepsilon, \varepsilon_0)}
Q^{n-1}(x)\cdot \psi^n(|x-x_0|) dm(x)\,,
\end{equation}
где $\omega_{n-1}$ -- площадь единичной сферы в ${\Bbb R}^n,$
$$J =J(\varepsilon,
\varepsilon_0):=\int\limits_{\varepsilon}^{\varepsilon_0}\
\frac{dr}{r\widetilde{q}_{x_0}^{\frac{1}{n-1}}(r)}\,,$$
\begin{equation}\label{eq5G}
\widetilde{q}_{x_0}(r)=\frac{1}{\omega_{n-1}r^{n-1}}\int\limits_{|x-x_0|=r}Q^{n-1}(x)\,d{\mathcal
H}^{n-1}\,,
\end{equation}
$A(x_0, \varepsilon, \varepsilon_0)=\{x\in {\Bbb R}^n:
\varepsilon<|x-x_0|<\varepsilon_0\}.$ В силу условия (\ref{eq5***})
и того, что $I(\varepsilon, \varepsilon_0)\rightarrow \infty$ при
$\varepsilon\rightarrow 0,$ правая часть соотношения (\ref{eq2A})
стремится к нулю при $\varepsilon\rightarrow 0.$ В таком случае, из
(\ref{eq2A}) следует, что $J(\varepsilon, \varepsilon_0)\rightarrow
\infty$ при $\varepsilon\rightarrow 0.$ Тогда ввиду
\cite[теорема~3]{KR} отображение $f$ продолжается до гомеоморфизма
$\overline{D}_P$ на $\overline{D^{\,\prime}}_P.$ В частности,
поскольку область $D^{\,\prime}$ ограничена, отсюда следует, что
пространство $\overline{D}_P$ содержит не менее двух простых концов
$P_1$ и $P_2\in E_D,$ где $E_D$ -- пространство простых концов в
области $D.$

Пусть теперь $P_1\subset E_D$ -- простой конец, не совпадающий с
$P_0,$ где $P_0$ -- фиксированный простой конец из условия леммы.
Предположим, $G_m,$ $m=1,2,\ldots,$ -- последовательность областей,
соответствующая простому концу $P_1$ и $x_m\in D$ -- произвольная
последовательность точек, такая что $x_m\rightarrow P_1$ при
$m\rightarrow\infty.$ Можно считать, что $x_m\in G_m$ для всякого
$m\in {\Bbb N}.$ Тогда, так как $f$ имеет непрерывное продолжение на
$\overline{D}_P,$ то $f(x_m)\rightarrow f(P_1)$ при
$m\rightarrow\infty.$ Из последнего соотношения вытекает, что
$h(f(x_m), f(P_1))\rightarrow 0$ при $m\rightarrow\infty.$ (Здесь,
как и выше, мы отождествляем $f(P_1)$ с соответствующей точкой
границы области $D^{\,\prime},$ а хордальное расстояние $h(f(x_m),
f(P_1))$ понимается как хордальное расстояние между $f(x_m)$ и этой
точкой в $\overline{{\Bbb R}^n}$). Заметим, что при всех $m\geqslant
m_0$ и некотором $m_0\in {\Bbb N}$
$$h(f(b_0), f(x_m))=h(b_0^{\,\prime}, f(x_m))\geqslant
h(b_0^{\,\prime},f(P_1))-h(f(x_m), f(P_1))\geqslant$$
\begin{equation}\label{eq3D}
 \geqslant\frac{1}{2}\cdot
h(b_0^{\,\prime}, \partial D^{\,\prime}):=\delta\,,
\end{equation}
где $h(b_0^{\,\prime}, \partial D^{\,\prime})$ обозначает хордальное
расстояние между $b_0^{\,\prime}$ и $\partial D^{\,\prime}.$
Построим последовательность континуумов $K_m,$ $m=1,2,\ldots,$
следующим образом. Соединим точку $x_1$ с точкой $b_0$ произвольной
кривой в $D,$ которую мы обозначим через $K_1.$ Далее, соединим
точки $x_2$ и $x_1$ кривой $K_1^{\prime},$ лежащей в $G_1.$
Объединив кривые $K_1$ и $K_1^{\prime},$ получим кривую $K_2,$
соединяющую точки $b_0$ и $x_2.$ И так далее. Пусть на некотором
шаге мы имеем кривую $K_m,$ соединяющую точки $x_m$ и $b_0.$
Соединим точки $x_{m+1}$ и $x_m$ кривой $K_m^{\,\prime},$ лежащей в
$G_m.$ Объединяя между собой кривые $K_m$ и $K_m^{\,\prime},$
получим кривую $K_{m+1}.$ И так далее.

Пусть $C_m,$ $m=1,2,\ldots,$ -- последовательность континуумов в
областях $D_m,$ заданная по условию. Покажем, что найдётся номер
$m_1\in {\Bbb N},$ такой что
\begin{equation}\label{eq4D}
D_m\cap K_m=\varnothing\quad\forall\quad m\geqslant m_1\,.
\end{equation}
Предположим, что (\ref{eq4D}) не имеет места, тогда найдутся
возрастающая последовательность номеров $m_k\rightarrow\infty,$
$k\rightarrow\infty,$ и последовательность точек $\xi_k\in
K_{m_k}\cap D_{m_k},$ $m=1,2,\ldots,\,.$ Тогда, с одной стороны,
$\xi_k \rightarrow P_0$ при $k\rightarrow\infty.$

\medskip
Рассмотрим следующую процедуру. Заметим, что возможны два случая:
либо все элементы $\xi_k$ при $k=1,2,\ldots$ принадлежат множеству
$D\setminus G_1,$ либо найдётся номер $k_1$ такой, что $\xi_{k_1}\in
G_1.$ Далее, рассмотрим последовательность $\xi_k,$ $k>k_1.$
Заметим, что возможны два случая: либо $\xi_k$ при $k>k_1$
принадлежат множеству $D\setminus G_2,$ либо найдётся номер
$k_2>k_1$ такой, что $\xi_{k_2}\in G_2.$ И так далее. Предположим,
элемент $\xi_{k_{l-1}}\in G_{l-1}$ построен. Заметим, что возможны
два случая: либо $\xi_k$ при $k>k_{l-1}$ принадлежат множеству
$D\setminus G_l,$ либо найдётся номер $k_l>k_{l-1}$ такой, что
$\xi_{k_l}\in G_l.$ И так далее. Эта процедура может быть как
конечной (оборваться на каком-то $l\in {\Bbb N}$), так и
бесконечной, в зависимости от чего мы имеем две ситуации:

1) либо найдутся номера $n_0\in {\Bbb N}$ и $l_0\in {\Bbb N}$ такие,
что $\xi_k\in D\setminus G_{n_0}$ при всех $k>l_0;$

2) либо для каждого $l\in {\Bbb N}$ найдётся элемент $\xi_{k_l}$
такой, что $\xi_{k_l}\in G_l,$ причём последовательность $k_l$
является возрастающей по $l\in {\Bbb N}.$

\medskip
Рассмотрим каждый из этих случаев и покажем, что в обоих из них мы
приходим к противоречию. Пусть имеет место ситуация 1), тогда
заметим, что все элементы последовательности $\xi_k$ принадлежат
$K_{n_0},$ откуда вытекает существование подпоследовательности
$\xi_{k_r},$ $r=1,2,\ldots,$ сходящейся при $r\rightarrow\infty$ к
некоторой точке $\xi_0\in D.$ Однако, с другой стороны $\xi_k\in
D_{m_k}$ и, значит, $\xi_0\in \bigcap\limits_{m=1}^{\infty}
\overline{D_m}\subset
\partial D$ (см. \cite[предложение~1]{KR} по этому поводу).
Полученное противоречие говорит о том, что случай 1) невозможен.
Пусть имеет место случай 2), тогда одновременно $\xi_k\rightarrow
P_0$ и $\xi_k\rightarrow P_1$ при $k\rightarrow\infty.$ В силу
непрерывного продолжения $f$ на $\overline{D}_P$ отсюда имеем, что
$f(\xi_k)\rightarrow f(P_0)$ и $f(\xi_k)\rightarrow f(P_1)$ при
$k\rightarrow\infty,$ откуда $f(P_0)=f(P_1),$ что противоречит
гомеоморфности продолжения $f$ в $\overline{D}_P.$ Полученное
противоречие указывает на справедливость соотношения (\ref{eq4D}).

Положим теперь $\widetilde{\varepsilon_0}=\min\{\varepsilon_0,
r_{m_1+1}\},$ и пусть $M_0$ -- натуральное число, такое что
$r_m<\widetilde{\varepsilon_0}$ при всех $m\geqslant M_0.$
Рассмотрим измеримую функцию
$$\eta_{m}(t)= \left\{
\begin{array}{rr}
\psi(t)/I(r_m, \widetilde{\varepsilon_0}), &   t\in (r_m, \widetilde{\varepsilon_0}),\\
0,  &  t\not\in (r_m, \widetilde{\varepsilon_0})\,,
\end{array}
\right.$$
%
%
где, как и прежде, величина $I(a, b)$ определяется соотношением
$I(a, b)=\int\limits_a^b\psi(t)\,dt.$ Заметим, что функция
$\eta_m(t)$ удовлетворяет соотношению вида (\ref{eq28*}), где вместо
$r_1$ и $r_2$ участвуют $r_m$ и $\widetilde{\varepsilon_0},$
соответственно. Заметим, что ввиду соотношения (\ref{eq4D}), а также
по определению разрезов $\sigma_m\subset r_m,$ $\Gamma\left(C_m,
K_m, D\right)>\Gamma(S(x_0, r_m), S(x_0, \widetilde{\varepsilon_0}),
D)$ и значит, $f(\Gamma\left(C_m, K_m, D\right))>f(\Gamma(S(x_0,
r_m), S(x_0, \widetilde{\varepsilon_0}), D )),$ откуда
$$M(f(\Gamma\left(C_m, K_m, D\right)))\leqslant M(f(\Gamma(S(x_0,
r_m), S(x_0, \widetilde{\varepsilon_0}), D))$$
(см. \cite[теорема~6.4]{Va}). В таком случае, согласно теореме
\ref{th4A}, мы получим, что
$$M\left(\Gamma\left(f(C_m), f(K_m),
D^{\,\prime}\right)\right)=$$
\begin{equation}\label{eq37***}
=M\left(f\left(\Gamma\left(C_m, K_m, D\right)\right)\right)\leqslant
M(f(\Gamma(S(x_0, r_m), S(x_0, \widetilde{\varepsilon_0}), D
)))\leqslant
\end{equation}
$$\leqslant\,\frac{K\cdot I^{p}(r_m, \varepsilon_0)}{I^n(r_m,
\widetilde{\varepsilon_0})}=K\cdot I^{p-n}(r_m,
\varepsilon_0)\cdot\alpha(r_m)\,,\quad m\geqslant M_0\,,$$
где $\alpha(r_m)$ определяется из соотношения (\ref{eq1.3}) при
$\sigma=r_m.$
Т.к. по условию область $D^{\,\prime}=f(D)$ является
$QED$-об\-лас\-тью, то при некоторой постоянной $A<\infty,$ см.
(\ref{eq4***}), из (\ref{eq37***}) получим
\begin{equation}\label{eq38***}
M\left(\Gamma\left(f(C_m), f(K_m), \overline{{\Bbb
R}^n}\right)\right)\leqslant\,K\cdot A\cdot I^{p-n}(r_m,
\varepsilon_0)\cdot\alpha(r_m)\,.
\end{equation}
Тогда, по предложению \ref{pr9}, из (\ref{eq38***}) получаем, что
$$
\frac {\omega_{n-1}}{\,\,\,\,{\left[\log
 \frac{2\lambda_n^2}{h (f(C_m))h
(f(K_m))}\right]}^{n-1}}\leqslant\,K\cdot A\cdot I^{p-n}(r_m,
\varepsilon_0)\cdot\alpha(r_m)\,,\quad m\geqslant M_0\,,
$$
откуда
\begin{equation}\label{eq40***}
h(f(C_m))\leqslant \frac{2\lambda_n^2}{h(f(K_m))}\exp\left\{-
\left(\frac{\omega_{n-1}}{KA}\right)^{\frac{1}{n-1}}I^{\frac{n-p}{n-1}}
(r_m, \varepsilon_0)\cdot(\alpha(r_m))^{-\frac{1}{n-1}}\right\}\,,
\end{equation}
$m\geqslant M_0.$ Заметим, что ввиду (\ref{eq3D}), из
(\ref{eq40***}) следует, что
\begin{equation*}
h(f(C_m))\leqslant \frac{2\lambda_n^2}{\delta}\exp\left\{-
\left(\frac{\omega_{n-1}}{KA}\right)^{\frac{1}{n-1}}I^{\frac{n-p}{n-1}}
(r_m, \varepsilon_0)\cdot(\alpha(r_m))^{-\frac{1}{n-1}}\right\}\,,
m\geqslant M_0\,.
\end{equation*}
Лемма доказана.~$\Box$
\end{proof}

\medskip
Для заданных областей $D,$ $D^{\,\prime}\subset {\Bbb R}^n,$ $n\ge
2,$ измеримой по Лебегу функции $Q,$ $b_0\in D,$ $b_0^{\,\prime}\in
D^{\,\prime},$ обозначим через $\frak{G}_{b_0, b_0^{\,\prime},
Q}\left(D, D^{\,\prime}\right)$ семейство всех нижних
$Q$-гомеоморфизмов $f:D\rightarrow D^{\,\prime},$ таких что
$f(D)=D^{\,\prime},$ $b_0^{\,\prime}=f(b_0).$ В наиболее общей
ситуации основное утверждение настоящего раздела может быть
сформулировано следующим образом.

\medskip
\begin{lemma}\label{lem3A}
{\,\sl Пусть область $D$ регулярна, область $D^{\,\prime}$
ограничена, имеет локально квазиконформную границу и, одновременно,
является $QED$-областью.

Предположим, что $Q\in L_{loc}^{n-1}({\Bbb R}^n),$ и что для каждого
$x_0\in \overline{D}$ найдётся $\varepsilon_0=\varepsilon(x_0)>0,$
такое, что при некотором $0<p<n$ выполнено условие (\ref{eq5***}),
где сферическое кольцо $A(x_0, \varepsilon, \varepsilon_0)$
определено как в (\ref{eq49***}), а $\psi$ -- некоторая
неотрицательная измеримая функция, такая, что при всех
$\varepsilon\in(0, \varepsilon_0)$ выполнено условие (\ref{eq7***}),
при этом, $I(\varepsilon, \varepsilon_0)\rightarrow \infty$ при
$\varepsilon\rightarrow 0.$

Тогда каждое $f\in\frak{G}_{b_0, b_0^{\,\prime}, Q}\left(D,
D^{\,\prime}\right)$ продолжается до гомеоморфизма
$f:\overline{D}_P\rightarrow \overline{D^{\,\prime}}_P,$ при этом
семейство таким образом продолженных отображений является
равностепенно непрерывным в $\overline{D}_P.$ }
\end{lemma}

\begin{proof} В силу условия
(\ref{eq5***}) и того, что $I(\varepsilon, \varepsilon_0)\rightarrow
\infty$ при $\varepsilon\rightarrow 0,$ правая часть соотношения
(\ref{eq2A}) стремится к нулю при $\varepsilon\rightarrow 0.$ В
таком случае, из (\ref{eq2A}) следует, что $J(\varepsilon,
\varepsilon_0)\rightarrow \infty$ при $\varepsilon\rightarrow 0,$
где $J$ определено в (\ref{eq5G}). Тогда ввиду \cite[теорема~3]{KR}
отображение $f\in\frak{G}_{b_0, b_0^{\,\prime}, Q}\left(D,
D^{\,\prime}\right)$ продолжается до гомеоморфизма $\overline{D}_P$
на $\overline{D^{\,\prime}}_P.$ Равностепенная непрерывность
семейства $\frak{G}_{b_0, b_0^{\,\prime}, Q}\left(D,
D^{\,\prime}\right)$ во внутренних точках области $D$ следует,
например, из комбинации теоремы \ref{th4A} и \cite[лемма~7.6]{MRSY}.
Осталось показать равностепенную непрерывность  $\frak{G}_{b_0,
b_0^{\,\prime}, Q}\left(\overline{D}_P,
\overline{D^{\,\prime}}_P\right)$ на $E_D.$

Предположим противное, а именно, что семейство $\frak{G}_{b_0,
b_0^{\,\prime}, Q}\left(\overline{D}_P,
\overline{D^{\,\prime}}_P\right)$ не является равностепенно
непрерывным в некоторой точке $P_0\in E_D.$ Тогда найдутся число
$a>0,$ последовательность $P_k\in \overline{D}_P,$ $k=1,2,\ldots$ и
элементы $f_k\in\frak{G}_{b_0, b_0^{\,\prime},
Q}\left(\overline{D}_P, \overline{D^{\,\prime}}_P\right)$ такие, что
$d(P_k, P_0)<1/k$ и
\begin{equation}\label{eq6E}
|f_k(P_k)-f_k(P_0)|\geqslant a\quad\forall\quad k=1,2,\ldots,\,.
\end{equation}
Ввиду возможности непрерывного продолжения каждого $f_k$ на границу
$D$ в терминах простых концов, для всякого $k\in {\Bbb N}$ найдётся
элемент $x_k\in D$ такой, что $d(x_k, P_k)<1/k$ и
$|f_k(x_k)-f_k(P_k)|<1/k.$ Тогда из (\ref{eq6E}) вытекает, что
\begin{equation}\label{eq7E}
|f_k(x_k)-f_k(P_0)|\geqslant a/2\quad\forall\quad k=1,2,\ldots,\,.
\end{equation}
Аналогично, в силу непрерывного продолжения отображения $f_k$ в
$\overline{D_P}$ найдётся последовательность $x_k^{\,\prime}\in D,$
$x_k^{\,\prime}\rightarrow P_0$ при $k\rightarrow \infty$ такая, что
$|f_k(x_k^{\,\prime})-f_k(P_0)|<1/k$ при $k=1,2,\ldots\,.$ Тогда из
(\ref{eq7E}) вытекает, что
\begin{equation}\label{eq8C}
|f_k(x_k)-f_k(x_k^{\,\prime})|\geqslant a/4\quad\forall\quad
k=1,2,\ldots\,,\,.
\end{equation}
Пусть $\sigma_m,$ $m=1,2,\ldots,$ -- соответствующая $P_0$ цепь
разрезов, лежащих на сферах с центром в некоторой точке $x_0\in
\partial D$ и радиусов $r_m\rightarrow 0,$ $m\rightarrow\infty.$
Пусть $D_m$ -- соответствующая $P_0$ последовательность
ассоциированных областей. Не ограничивая общности рассуждений, можно
считать, что $x_k$ и $x_k^{\,\prime}$ принадлежат области $D_k.$
Соединим точки $x_k$ и $x_k^{\,\prime}$ кривой $C_k$ лежащей в
$D_k.$ Тогда по лемме \ref{lem3} мы получим, что
$h(|f(C_k)|)\rightarrow 0$ при $k\rightarrow \infty,$ что
противоречит неравенству (\ref{eq8C}). Полученное противоречие
указывает на то, что исходное предположение об отсутствии
равностепенной непрерывности семейства $\frak{G}_{b_0,
b_0^{\,\prime}, Q}\left(\overline{D}_P,
\overline{D^{\,\prime}}_P\right)$ было неверным.~$\Box$
\end{proof}

\medskip
Из леммы \ref{lem3A} на основе рассуждений, приведённых при
доказательстве теоремы \ref{th4}, получаем следующее утверждение.

\medskip
\begin{theorem}\label{th8}{\,\sl
Пусть область $D$ регулярна, область $D^{\,\prime}$ ограничена,
имеет локально квазиконформную границу и, одновременно, является
$QED$-областью.

Предположим, что $Q\in L_{loc}^{n-1}({\Bbb R}^n),$ и что для каждого
$x_0\in \overline{D}$ выполнено одно из следующих условий:

1) либо $Q^{n-1}\in FMO(\overline{D});$

2) либо в каждой точке $x_0\in \overline{D}$ при некотором
$\varepsilon_0=\varepsilon_0(x_0)>0$ и всех
$0<\varepsilon<\varepsilon_0$
$$
\int\limits_{\varepsilon}^{\varepsilon_0}
\frac{dt}{t\widetilde{q}_{x_0}^{\,\frac{1}{n-1}}(t)}<\infty\,,\qquad
\int\limits_{0}^{\varepsilon_0}
\frac{dt}{t\widetilde{q}_{x_0}^{\,\frac{1}{n-1}}(t)}=\infty\,,
$$
где
$\widetilde{q}_{x_0}(r):=\frac{1}{\omega_{n-1}r^{n-1}}\int\limits_{|x-x_0|=r}Q^{n-1}(x)\,d{\mathcal
H}^{n-1}.$

Тогда каждое $f\in\frak{G}_{b_0, b_0^{\,\prime}, Q}\left(D,
D^{\,\prime}\right)$ продолжается до гомеоморфизма
$f:\overline{D}_P\rightarrow \overline{D^{\,\prime}}_P,$ при этом
семейство таким образом продолженных отображений является
равностепенно непрерывным в $\overline{D}_P.$}
\end{theorem}

\medskip
{\it Доказательство теоремы \ref{th7A}}. Утверждение теоремы
вытекает из теоремы \ref{th8}. Действительно, согласно лемме
\ref{thOS4.1} каждое $f\in \frak{F}_{b_0, b_0^{\,\prime}, \varphi,
Q}(D, D^{\,\prime})$ является нижним кольцевым
$Q^{1/(n-1)}$-отображением в $\overline{D}.$ В таком случае,
желанное заключение прямо вытекает из теоремы \ref{th8}.~$\Box$

\medskip
КОНТАКТНАЯ ИНФОРМАЦИЯ

\medskip
\noindent{{\bf Евгений Александрович Севостьянов} \\
Житомирский государственный университет им.\ И.~Франко\\
ул. Большая Бердичевская, 40 \\
г.~Житомир, Украина, 10 008 \\ тел. +38 066 959 50 34 (моб.),
e-mail: esevostyanov2009@mail.ru}

\end{document}